\newcommand\footnoteref[1]{\protected@xdef\@thefnmark{\ref{#1}}\@footnotemark}
\newcommand{\lmfdbec}[3]{\href{http://www.lmfdb.org/EllipticCurve/Q/#1/#2/#3}{#1#2#3}}
\newcommand{\cC}{{\mathcal{C}}}
\newcommand{\Q}{\mathbb{Q}}
\newcommand{\Z}{\mathbb{Z}}
\newcommand{\F}{\mathbb{F}}
\newcommand{\OO}{\mathcal{O}}
\newcommand{\Qbar}{{\overline{\mathbb Q}}}
\newtheorem{tm}{Theorem}[section]
\newtheorem{proposition}[tm]{Proposition}
\newtheorem{lemma}[tm]{Lemma}
\theoremstyle{definition}
\newtheorem{definition}{Definition}
\theoremstyle{remark}
\newtheorem{remark}[tm]{Remark}
\DeclareMathOperator{\Gal}{Gal}
\DeclareMathOperator{\End}{End}
\DeclareMathOperator{\Hom}{Hom}
\title{Torsion of $\Q$-curves over quadratic fields}
\author{Samuel Le Fourn}
\address{Mathematics Institute, University of Warwick, Coventry, CV4 7AL, United Kingdom.}
\email{Samuel.Le-Fourn@warwick.ac.uk}
\author{Filip Najman}
\address{University of Zagreb, Faculty of Science, Department of Mathematics,  Bijeni\v{c}ka cesta 30, 10000 Zagreb, Croatia}
\email{fnajman@math.hr}
\thanks{Le Fourn was supported  by the European Union’s Horizon 2020 research and programme under the Marie Sklodowska-Curie grant agreement No 793646, titled LowDegModCurve. Najman was supported by the QuantiXLie Centre of Excellence, a project
co-financed by the Croatian Government and European Union through the
European Regional Development Fund - the Competitiveness and Cohesion
Operational Programme (Grant KK.01.1.1.01.0004) and by the Croatian Science Foundation under the project no. IP-2018-01-1313.}
\date{\today}
\keywords{Elliptic curves, $\Q$-curves, torsion}
\subjclass[2010]{11G05}
\begin{document}
\begin{abstract}
We determine all the possible torsion groups of $\Q$-curves over quadratic fields and determine which groups appear finitely and which appear infinitely often.
\end{abstract}
\maketitle
\section{Introduction}

In the study of elliptic curves over number fields, describing the possible torsion groups plays an important role. This subject has a long history in which probably the most famous results are Mazur's torsion theorem \cite{mazur2} and Merel's proof of the uniform boundedness conjecture \cite{merel}.

Historically, people have mostly studied the possible torsion groups of all elliptic curves over degree $d$ number fields. One might however be interested in a more precise classification of possible properties of elliptic curves over degree $d$ number fields, in the sense that one wants to find the properties of some subset of all elliptic curves defined over degree $d$ number fields. This is both interesting in its own right and might be useful for applications, especially in Diophantine equations, as the curves that arise there often have some special property. To give two (out of many) examples, Pila \cite{pila} uses results about possible isogenies of non-CM elliptic curves with $\Q$-rational $j$-invariants \cite{naj2} (no such results exist for all elliptic curves over degree $d$ fields) to prove results about some diophantine problems arising out of "unlikely intersections" and Dieulefait and Urroz \cite{du} use properties of $\Q$-curves over quadratic fields to solve the equation $x^4 +dy^2 = z^p$ for $d=2$ and $3$ and for large $p$.


The classification of all possible torsion groups of elliptic curves over quadratic fields was done in two steps, by Kenku and Momose \cite{km} and by Kamienny \cite{kam}. All the possible torsion groups have been determined for the following subsets of all elliptic curves over quadratic fields: for elliptic curves with integral $j$-invariants \cite{msz}, elliptic curves that are base changes of elliptic curves defined over $\Q$ \cite{najman} and  elliptic curves with complex multiplication \cite{clark}.

In this paper we determine the possible torsion groups of $\Q$-curves over quadratic fields. Recall that a $\Q$-curve is an elliptic curve isogenous (over $\overline \Q$) to all of its $\Gal(\overline \Q /\Q)$-conjugates. These properties are obviously satisfied by elliptic curves defined over $\Q$, so $\Q$-curves can be thought of as generalizations of elliptic curves defined over $\Q$. Moreover, Ribet \cite{ribet} proved (assuming Serre's conjecture, which has since been proved \cite{khw, khw2}) that $\Q$-curves are exactly the elliptic curves over number fields that are modular, in the sense of being a quotient of $J_1(N)$ for some $N$.

Torsion groups of $\Q$-curves over quadratic (and higher degree) fields have already been studied by Sairaiji and Yamauchi \cite{ST}, but they imposed a number of additional conditions on their curves: the isogeny $\phi:E\rightarrow E^\sigma$ must be of squarefree degree and defined over the quadratic field and $j(E)\not\in \Q$ (here, as in the remainder of the paper, $\sigma$ denotes the generator of $\Gal(K/\Q)$). Moreover, they determine only the possible orders $N$ of points in $E(K)_{tors}$, for a $\Q$-curve over a quadratic field $K$ with the additional property that $N$ is divisible by the degree of the isogeny $\phi:E\rightarrow E^\sigma$.

Throughout the paper $\cC_n$ will denote a cyclic group of order $n$. Our main result is the following theorem.
\begin{tm}
\label{maintm}
Let $E$ be a $\Q$-curve defined over a quadratic field $K$. Then $E(K)_{tors}$ is isomorphic to one of the following groups
\begin{align*}
&\cC_n, \text{ where } 1\leq n \leq 18,\ n\neq 11,17\\
&\cC_2 \times \cC_{2n}, \text{ where } n=1, \ldots, 6,\\
&\cC_3 \times \cC_{3n}, \text{ where } n=1,2,\\
&\cC_4 \times \cC_{4}.
\end{align*}
There are infinitely many $\Q$-curves with each of these torsion groups, except for $\cC_{14}$ and $\cC_{15}$ of which there are finitely many.
\end{tm}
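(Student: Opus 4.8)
The plan is to combine the classification of torsion of elliptic curves over quadratic fields with a study of rational and quadratic points on modular curves, treating separately the curves with complex multiplication, the non-CM curves with rational $j$-invariant, and the ``genuine'' $\Q$-curves (non-CM with $j\notin\Q$). A $\Q$-curve over $K$ is in particular an elliptic curve over $K$, so its torsion already lies in the list of \cite{km,kam}; comparing that list with the statement, the only upper-bound assertion not contained in it is that a $\Q$-curve over a quadratic field cannot have a point of order $11$ (the group $\cC_{17}$ is absent from the quadratic list to begin with). Every CM elliptic curve is $\overline\Q$-isogenous to all of its conjugates, since these all have CM by the same order, so every CM curve is a $\Q$-curve and the CM case follows from \cite{clark}, whose list contains no $\cC_{11}$ and is a sublist of ours. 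If $E/K$ is non-CM with $j(E)\in\Q\setminus\{0,1728\}$, then $E$ is a quadratic twist over $K$ of the base change of some $E_0/\Q$, so a $K$-rational point of order $n$ on $E$ is an $n$-torsion point of $E_0$ defined over a field of degree at most $4$; combining \cite{najman}, the classification of isogenies of elliptic curves with rational $j$-invariant \cite{naj2}, and the known torsion results for such curves over number fields of small degree, one rules out $n=11$ and checks that no group outside our list arises.

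The remaining, central case is that of genuine $\Q$-curves, where the key device is descent along the Fricke (Atkin--Lehner) involution. If $E/K$ is non-CM with $j(E)\notin\Q$, then $\Hom(E,E^\sigma)$ has rank one, so the kernel of the isogeny $\phi\colon E\to E^\sigma$ of minimal degree $d$ is $G_K$-stable and $x:=(E,\ker\phi)$ is a point of $X_0(d)(K)$; a short computation with $\phi^\sigma$ and the dual $\widehat\phi$ gives $\phi^\sigma=\pm\widehat\phi$, hence $x^\sigma=w_d(x)$, so $x$ descends to a non-cuspidal, non-CM $\Q$-rational point of $X_0^*(d)=X_0(d)/\langle w_d\rangle$. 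If $E$ moreover has a $K$-rational point $P$ of order $n$, one distinguishes according to whether $\langle P\rangle^\sigma$ equals $\phi(\langle P\rangle)$: in the first case one gets a $\Q$-rational point on the quotient of $X_1(n)\times_{X(1)}X_0(d)$ by a partial Atkin--Lehner involution, and in the second $E^\sigma$ acquires a second $K$-rational cyclic $n$-subgroup, so one lands on $X_1(n)\times_{X(1)}X_0(n)$ over $K$; in both cases the parametrising curve is explicit. After bounding $d$ — using the known rational points of $X_0(d)$ and $X_0^*(d)$ together with the extra torsion — only finitely many such modular curves remain, and determining their rational and quadratic points excludes $n=11$, pins down exactly which torsion structures are attained by $\Q$-curves over quadratic fields, and shows that $\cC_{14}$ and $\cC_{15}$ occur for only finitely many $\Q$-curves because the relevant curve has genus at least $2$ (or reduces to a rank-$0$ elliptic curve).

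For the infinitude statements, for the groups already realised over $\Q$ — Mazur's genus-$0$ curves $X_1(n)$ for $n\le 10$ and $n=12$, and $\cC_2\times\cC_{2n}$ for $n\le 6$ — one base changes those families to quadratic fields. For the groups that occur only among genuine $\Q$-curves one instead pulls the $\Gamma_1(n)$-structure back to a suitable Atkin--Lehner quotient of a fibre product, producing a one-parameter family of genuine $\Q$-curves over varying quadratic fields with the prescribed torsion, and checks, via an explicit model, that the parametrising curve is of genus $0$ or is a positive-rank elliptic curve, hence carries infinitely many points. Running the same analysis for $\cC_{14}$ and $\cC_{15}$ shows the corresponding curve is finite, which, with the upper-bound analysis, gives the finiteness of the set of $\Q$-curves with each of those two torsion groups.

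The main obstacle is the explicit determination of rational and quadratic points on the modular curves arising in the genuine case: both the exclusion of $11$-torsion and the finiteness statements for $\cC_{14}$ and $\cC_{15}$ come down to provably complete computations of points on curves of genus at least $2$, via Chabauty--Coleman, the Mordell--Weil sieve, and the analysis of Atkin--Lehner quotients of $X_0(N)$. One must also control the degree $d$ of the defining isogeny — for instance through the modularity of $\Res_{K/\Q}(E)$ and Eisenstein-type bounds on the torsion of modular abelian surfaces — and, for the existence direction, keep track of the quadratic twist needed to realise a given point of $X_0^*(d)$ by an honest elliptic curve over a quadratic field.
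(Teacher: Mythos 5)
There is a genuine gap at the heart of your plan for the ``genuine $\Q$-curve'' case: the step ``after bounding $d$ \dots only finitely many such modular curves remain'' is not justified, and no such bound is available. Quadratic $\Q$-curves of degree $d$ correspond to rational points on $X_0(d)/w_d$, and these exist for infinitely many values of $d$ (all $d$ for which this quotient has genus $0$ or is a positive-rank elliptic curve, and a priori also sporadic points for larger $d$; ruling those out for large prime $d$ is an open conjecture of Elkies). Adding the level structure from the torsion point does not rescue the argument: for each fixed $d$ the relevant fibre product has finitely many quadratic points by Faltings once its genus is large, but ``finitely many for each $d$'' summed over infinitely many admissible $d$ proves nothing, and for small $d$ some of the resulting curves (e.g.\ $X_0(22)$ in the $11$-torsion case) actually have infinitely many quadratic points by \Cref{tm:bars}. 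What is needed — and what your proposal lacks — is an argument \emph{uniform in $d$}. This is precisely what the paper's key inputs supply: \Cref{thm:st} (a $K$-rational point of prime order $N\mid d_\sigma$ on a central $\Q$-curve with $j\notin\Q$ forces $N\in\{2,3\}$), \Cref{tm:lf} (reducibility of the mod $p$ representation for $p\nmid d_\sigma$ forces potentially good reduction everywhere), and the formal-immersion arguments of \Cref{appendix:lem1}--\Cref{appendix:lem2}, all of which hold for arbitrary squarefree $d$. The paper then never computes points on a high-genus curve to kill $\cC_{11}$: it deduces potential good reduction and derives a contradiction locally at the prime above $2$ via the Hasse bound and the Kodaira--N\'eron classification; similarly, integrality of $j$ contradicting \Cref{tm:int} handles most of the $\cC_{14}$ analysis. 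Your proposed Chabauty/Mordell--Weil-sieve computations on ``the finitely many remaining curves'' cannot even be set up until the reduction to finitely many curves is achieved, so the proof as outlined does not close.

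Two smaller points. First, your reduction to a cyclic isogeny with $G_K$-stable kernel and the descent $x^\sigma=w_d(x)$ to $X_0(d)/w_d$ is fine and matches the paper's Lemma~\ref{lem:izogenija} and the use of \Cref{prop:central}, but you should note that the minimal-degree isogeny need not have squarefree degree, which is why the paper passes to an isogenous \emph{central} $\Q$-curve and then controls the extra isogeny factors via quadratic points on $X_0(45)$, $X_0(75)$, $X_0(49)$ using \Cref{tm:bars}. Second, finiteness of the parametrising object for $\cC_{14}$ does not by itself show that $\cC_{14}$ \emph{occurs}; an explicit example (the paper quotes the Sairaiji--Yamauchi curve over $\Q(\sqrt{-7})$) is still required to complete the statement of the theorem.
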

\begin{remark}
For a precise definition of what we mean by "finitely many" and "infinitely many", see \Cref{def:fin}.
\end{remark}

\Cref{prop:11}, \Cref{prop:15} and \Cref{prop:14}, together with the results presented in \Cref{sec:2} will prove \Cref{maintm}.

The only group that can be a torsion group of any elliptic curve over a quadratic field, but \textit{not} of a $\Q$-curve is $\cC_{11}$.
Note that some authors define $\Q$-curve to be curves without complex multiplication (CM). We will allow $\Q$-curves to have CM, although none of the results of the paper would be changed if we restricted to non-CM $\Q$-curves.

A surprising amount of different ingredients (which will be explained in detail in the next section), of which most are very recent, go into the proof of \Cref{maintm}: the results of Sairaiji and Yamauchi about central $\Q$-curves \cite{ST}; classification of possible torsion groups of elliptic curves over quadratic fields that are base changes by Najman \cite{najman}; and of those with integral $j$-invariants by M\"uller, Str\"oher and Zimmer \cite{msz}; Le Fourn's results about the surjectivity of mod $p$ Galois representations attached to quadratic $\Q$-curves and consequences on their reduction types \cite{lf}; the results of Bosman, Bruin, Dujella and Najman showing that all elliptic curves with $\cC_{13}$, $\cC_{16}$ and $\cC_{18}$ torsion are $\Q$-curves \cite{bbdn} and Bars' results about which classical modular curves $X_0(n)$ have infinitely many quadratic points \cite{bars}.

\section{Definitions and useful results form other papers}
\label{sec:2}

Although it is probably clear to the reader what we mean by saying that there are finitely many $\Q$-curves with some torsion group "up to isomorphism", we give a rigorous definition below.

\begin{definition}
\label{def:fin}
We say that there are finitely many elliptic curves over number fields of degree $d$ with a property $P$, if there are finitely many pairs $(E,K)$ with that property $P$ up to isomorphism, where isomorphism means the following: a pair $(E_1,K_1)$ with $E_1/K_1$ an elliptic curve is isomorphic to $(E_2,K_2)$, where $E_2/K_2$ is an elliptic curve, if there exists an isomorphism of fields $\phi_1:K_1\rightarrow K_2$ and a $K_2$-isomorphism of elliptic curves $\phi_2:\phi_1(E_1)\rightarrow E_2$. Here $\phi(E_1)/K_2$ is the elliptic curve obtained by taking a model of $E_1$ over $K_1$ and then mapping the coefficients of $E_1$ to $K_2$ via $\phi_1$, thereby obtaining a model of $\phi(E_1)$ over $K_2$.
\end{definition}

\begin{definition}
Let $E$ be a $\Q$-curve, $\phi_{\sigma}:E\rightarrow E^\sigma$ be an isogeny of degree $d_\sigma$. We then say that $E$ is a $\Q$-curve of degree $d_\sigma$. If $d_\sigma$ is square-free, we say that $E$ is a central $\Q$-curve.
\end{definition}

Throughout the paper $d_\sigma$ will always denote the degree of a $\Q$-curve, often without explicit mention.

The modular curve $X_0^+(N)=X_0(N)/w_N$ is a moduli space whose non-cuspidal points correspond to a set of two elliptic curves which are $N$-isogenous to each other. Points in $X_0^+(N)(\Q)$ correspond to a pair of $\Q$-curves of degree $N$ defined over a quadratic field. Quadratic central $\Q$-curves correspond to rational points on $X_0^+(N)$, where $N$ is square free.

First, it is useful to know which groups can appear as torsion groups of all elliptic curves over quadratic fields.

\begin{tm}\cite{kam,km}
\label{svi}
Let $E$ be an elliptic curve defined over a quadratic field $K$. Then $E(K)_{tors}$ is isomorphic to one of the following groups
\begin{align*}
&\cC_n, \text{ where } n=1, \ldots,\ldots 16, 18,\\
&\cC_2 \times \cC_{2n}, \text{ where } n=1, \ldots, 6,\\
&\cC_3 \times \cC_{3n}, \text{ where } n=1,2,\\
&\cC_4 \times \cC_{4}.
\end{align*}
\end{tm}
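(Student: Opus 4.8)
The plan is to establish \Cref{svi} in two halves: a \emph{realization} step producing, for each group on the list, at least one elliptic curve over a quadratic field attaining it, and an \emph{exclusion} step ruling out every group not on the list. The organizing objects throughout are the modular curves $X_1(M,N)$ with $M\mid N$, whose non-cuspidal, non-CM points over a field $L$ classify pairs $(E,\iota)$ consisting of an elliptic curve $E/L$ and an embedding $\cC_M\times\cC_N\hookrightarrow E(L)$; a torsion group occurs over a quadratic field precisely when the corresponding curve has a suitable quadratic point. A first, purely group-theoretic constraint isolates the admissible shapes: if $E[M]\subseteq E(K)$ then the Weil pairing forces $\mu_M\subseteq K$, so $\varphi(M)\mid 2$ and hence $M\in\{1,2,3,4,6\}$. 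This limits the smaller invariant factor of any $\cC_M\times\cC_N$ to $\cC_2,\cC_3,\cC_4$, or $\cC_6$, and already shows that groups such as $\cC_5\times\cC_5$ cannot appear over a quadratic field.

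For the realization half I would compute the genus of each $X_1(M,N)$ appearing in the statement. Those of genus $0$, and those of genus $1$ whose Jacobian has positive rank or which are covered by a suitable conic/elliptic curve, carry infinitely many quadratic points, giving infinitely many elliptic curves with the prescribed torsion; for the groups whose modular curve has higher genus (for instance $\cC_{16}$, $\cC_{18}$, and $\cC_4\times\cC_4$) I would instead exhibit explicit quadratic points or explicit Weierstrass models, in each case checking that the point is neither cuspidal nor CM-degenerate. This yields the lower bound: every group listed is genuinely attained.

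The exclusion half is the core of the theorem and is where the arguments of Kenku--Momose \cite{km} and Kamienny \cite{kam} enter. Uniform boundedness of torsion (via Merel \cite{merel}, or Kamienny's earlier bound over quadratic fields) reduces the task to finitely many candidate groups: the cyclic groups $\cC_{17}$ and $\cC_n$ for $n\geq 19$, together with the remaining two-generator groups permitted by the Weil pairing, namely $\cC_2\times\cC_{2n}$ for $n\geq 7$, $\cC_3\times\cC_{3n}$ for $n\geq 3$, $\cC_4\times\cC_{4n}$ for $n\geq 2$, and all $\cC_6\times\cC_{6n}$. For each such group one must show that the associated $X_1(M,N)$ has no non-cuspidal, non-CM quadratic point. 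The method is Kamienny's refinement of Mazur's argument: a quadratic point gives an effective degree-$2$ divisor, hence a rational point on the symmetric square $X_1(N)^{(2)}$; composing the Abel--Jacobi map with projection to a well-chosen quotient $J$ of $J_1(N)$ (an Eisenstein or winding quotient) whose Mordell--Weil group $J(\Q)$ is finite, one obtains finitely many possible images. A \emph{formal immersion} argument then shows that, modulo a carefully chosen prime $p$ of good reduction, the map $X_1(N)^{(2)}\to J$ is a formal immersion at the reduction of a cusp, which forces any quadratic point to reduce to that cusp and ultimately produces a contradiction unless the point was cuspidal to begin with.

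The two genuinely hard inputs are exactly these: first, producing a quotient of $J_1(N)$ of rank zero over $\Q$, which rests on Mazur's analysis of the Eisenstein ideal and on Kolyvagin--Logachev-type vanishing for the winding quotient; and second, verifying the formal immersion condition, which reduces to checking that an explicit matrix of Fourier coefficients of a spanning set of Hecke-translates of differentials has full rank modulo $p$. The latter is a finite but delicate computation that must be carried out case-by-case for each excluded $N$, and the subtlest of these is $\cC_6\times\cC_6$, where $\mu_6\subseteq\Q(\sqrt{-3})$ is available and so the exclusion requires genuinely analyzing quadratic points on the full-level curve $X_1(6,6)=X(6)$ rather than being handed to us by the Weil pairing. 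I expect this combination of securing rank-zero quotients and confirming the formal immersion condition to be the main obstacle.
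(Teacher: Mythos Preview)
The paper does not prove this theorem; it is stated with the citation \cite{kam,km} and used as a known input, so there is no proof in the paper to compare your proposal against. Your outline is a faithful sketch of the strategy actually carried out in the cited works of Kenku--Momose and Kamienny (Weil-pairing constraint on the smaller invariant factor, realization via low-genus $X_1(M,N)$, and exclusion via the symmetric-square/formal-immersion argument with a rank-zero Eisenstein or winding quotient of $J_1(N)$), so as a reconstruction of the literature it is reasonable.

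One historical caveat: you invoke Merel's uniform boundedness to reduce to finitely many candidate groups, but Merel's theorem postdates \cite{kam,km} and in fact builds on Kamienny's techniques, so this is circular as written; the correct logical order is that Kamienny proves the quadratic uniform bound directly by the formal-immersion method for each prime $p$ up to the relevant range, not by appealing to a pre-existing bound. Also, the exclusion of $\cC_6\times\cC_6$ is not really the subtlest case: since it forces $K=\Q(\sqrt{-3})$, one is working over a single fixed field, and the relevant modular curve $X(6)$ has genus $1$ with finitely many $\Q(\sqrt{-3})$-points, which can be handled by direct computation rather than the full Kamienny machinery.
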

\begin{remark}
For all groups $T$ in the list there exist infinitely many (up to isomorphism) pairs $(E,K)$ such that $E(K)_{tors}\simeq T$.
\end{remark}

The following theorem tells us which torsion groups are possible for elliptic curves with integral $j$-invariant over quadratic fields.
\begin{tm}\cite[Theorem 4]{msz}
\label{tm:int}
Let $E$ be an elliptic curve with integral $j$-invariant over a
quadratic field $K$. Then, up to isomorphism, the torsion group of $E$ over $K$ is one of the following groups
\begin{align*}
&\cC_n, \text{ where } n=1, \ldots,\ldots 8,10,\\
&\cC_2 \times \cC_{2n}, \text{ where } n=1, 2, 3,\\
&\cC_3 \times \cC_{3}.\\
\end{align*}
\end{tm}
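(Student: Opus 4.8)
The plan is to start from the classification in \Cref{svi} of torsion groups over quadratic fields and to discard, one group at a time, the structures in that list which are absent from \Cref{tm:int}. Since the target list is closed under passing to the relevant subgroups, it suffices to show that an elliptic curve $E$ with integral $j$-invariant over a quadratic field $K$ has no $K$-rational point of order $9,11,12,13,14,15,16$ or $18$, and that $E(K)_{tors}$ is not isomorphic to $\cC_2\times\cC_8$, $\cC_2\times\cC_{10}$, $\cC_3\times\cC_6$ or $\cC_4\times\cC_4$.

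The one extra input is that $j(E)$ being an algebraic integer is equivalent to $E$ having potentially good reduction at \emph{every} finite prime $\mathfrak p$ of $K$ (the valuation criterion via the Tate curve: $v_{\mathfrak p}(j)<0$ if and only if $E$ has potentially multiplicative reduction at $\mathfrak p$). Fix such a $\mathfrak p$ above a rational prime $p$, let $L_{\mathfrak p}/K_{\mathfrak p}$ be a finite extension over which $E$ acquires good reduction, and let $k$ denote the residue field of $L_{\mathfrak p}$. A $K$-rational torsion point $P$ of exact order $m=p^am'$ with $p\nmid m'$ then produces two constraints: the prime-to-$p$ part of $P$ injects into the reduction, so $m'\mid\#\widetilde E(k)$ and hence $m'\leq(\sqrt{\#k}+1)^2$ by Hasse; and the $p$-part of $P$ survives in the formal group of $E/L_{\mathfrak p}$, which forces the ramification index of $L_{\mathfrak p}$ over $\Q_p$ to be at least $(p-1)p^{a-1}$. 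For $p\geq 5$ one may take $L_{\mathfrak p}$ tamely and totally ramified of degree dividing $12$ (Serre: the inertia action factors through a cyclic group of order in $\{1,2,3,4,6\}$), so $k$ has the same cardinality as the residue field of $K_{\mathfrak p}$; choosing a prime $p$ that does not remain inert in $K$ then gives $k=\F_p$ and $m'\leq p+1+2\sqrt p$. Running this over $p=5,7,13$ excludes $K$-rational points of order $11,13,14$ and $15$.

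The surviving cases --- points of order $9,12,16,18$ and the groups $\cC_2\times\cC_8$, $\cC_2\times\cC_{10}$, $\cC_3\times\cC_6$, $\cC_4\times\cC_4$ --- all unavoidably involve the primes $2$ and $3$, and this is where the main difficulty lies: there the extension of $K_{\mathfrak p}$ needed to achieve good reduction may be wildly ramified and its residue field may grow, so the clean tame bound above is unavailable. One route is a finer case analysis of the possible Kodaira reduction types and conductor exponents at $2$ and $3$ for integral-$j$ curves, combined with the reduction bounds above. An alternative --- and for the non-cyclic and $2$-power cases arguably the most transparent route --- is modular: such a point produces a quadratic point, or (once the Weil pairing forces $\zeta_N\in K$) a point over $\Q(i)$ or $\Q(\sqrt{-3})$, on one of the modular curves $X_1(16)$, $X_1(2,8)$, $X_1(2,10)$, $X_1(3,6)$ or $X(4)$, each of which is either rational or of small genus and carries an explicitly computable map to the $j$-line; one then checks directly that the $j$-coordinate of every such point has negative valuation at a prime above $2$ (respectively $3$) and is therefore not integral. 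Together with \Cref{svi}, these exclusions leave precisely the list in \Cref{tm:int}.
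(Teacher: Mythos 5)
The paper offers no internal proof of this statement: it is imported wholesale from M\"uller--Str\"oher--Zimmer \cite[Theorem 4]{msz}, whose proof is a lengthy explicit analysis, so there is nothing in the source to compare your argument against except the citation itself. Judged on its own terms, your outline starts from the right place (integrality of $j$ is equivalent to potentially good reduction at every finite prime; prime-to-$p$ torsion injects into the special fibre; Hasse bound), but it has a genuine gap already in the cases you claim to settle. The step ``choosing a prime $p$ that does not remain inert in $K$'' is not a choice you get to make: $K$ is given, and the numerics are tight. To exclude a point of order $11$ or $13$ you need a residue field $k$ with $(\sqrt{\#k}+1)^2<11$, which forces $\#k\le 5$; the only admissible prime $p\ge 5$ is $p=5$, and if $5$ is inert in $K$ then $k=\F_{25}$ and the Hasse interval happily allows $\#\widetilde E(\F_{25})\in\{22,33\}$ (resp.\ $26,39$). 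The same failure occurs for orders $14$ and $15$ when both $5$ and $7$ are inert, and $p=13$ gives no Hasse contradiction for any of these orders. So as written the tame argument only covers quadratic fields in which the relevant small prime splits or ramifies; the inert case needs a further idea (for instance working at the prime above $2$, or at the prime dividing the order via the formal-group filtration), and neither is supplied.

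More seriously, the cases you yourself identify as ``where the main difficulty lies'' --- points of order $9,12,16,18$ and the groups $\cC_2\times\cC_8$, $\cC_2\times\cC_{10}$, $\cC_3\times\cC_6$, $\cC_4\times\cC_4$ --- are not treated at all: you name two possible routes (a Kodaira/conductor case analysis at $2$ and $3$, or explicit quadratic points on $X_1(16)$, $X_1(2,8)$, etc.\ together with valuations of the $j$-map) but carry out neither, and these occupy the bulk of the actual proof in \cite{msz}. Note in particular that $X_1(16)$ and $X_1(18)$ have genus $2$ and hence infinitely many quadratic points, so the modular route requires showing non-integrality of $j$ along infinite families, not checking a finite list. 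A smaller imprecision: the $p$-primary part of a $K$-rational torsion point need not land in the formal group, so your ``two constraints'' should really be stated as a dichotomy (either it injects into $\widetilde E(k)$ or some nonzero multiple lies in the kernel of reduction), with the ramification bound applying only in the second branch. In summary, the strategy is reasonable but the proposal does not constitute a proof, and for the purposes of this paper the statement should simply be taken as the cited external result.
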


To prove \Cref{maintm}, we need to show:
\begin{itemize}
  \item[1)] That $\cC_{11}$ does not appear as the torsion of a $\Q$-curve over a quadratic field.
  \item[2)] That each group from \Cref{svi} except $\cC_{11}$ does appear as a torsion group of a $\Q$-curve, and determine whether it appears infinitely often.
\end{itemize}

The first place where one wants to start looking for $\Q$-curves is among elliptic curves defined over $\Q$.

\begin{tm}\cite[Theorem 2]{najman}
\label{tm:bc}
Let $E$ be an elliptic curve defined over $\Q$ and let $K$ be a quadratic field. Then $E(K)_{tors}$ is isomorphic to one of the following groups
\begin{align*}
&\cC_n, \text{ where } n=1, \ldots,10,12,15,16,\\
&\cC_2 \times \cC_{2n}, \text{ where } n=1, \ldots, 6\\
&\cC_3 \times \cC_{3n}, \text{ where } n=1,2,\\
&\cC_4 \times \cC_{4}.
\end{align*}
For all groups $T$ in the list, except for $\cC_{15}$, there exist infinitely many (up to isomorphism) pairs $(E,K)$, with $E/\Q$, such that $E(K)_{tors}\simeq T$. The elliptic curves with Cremona label 50b1 and 50a3 have $15$-torsion over $\Q(\sqrt 5)$, and 50b2 and 450b4 have $15$-torsion over $\Q(\sqrt{-15})$; these are the only elliptic curves defined over $\Q$ having non-trivial $15$-torsion over any quadratic field.
\end{tm}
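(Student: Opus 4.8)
The plan is to read off the list of possible $E(K)_{\tors}$ from \Cref{svi}, which already confines it to the groups displayed there, and then to eliminate the four groups $\cC_{11},\cC_{13},\cC_{14},\cC_{18}$ that occur in \Cref{svi} but are absent from our list. The main tool is the interplay between $E$ and its quadratic twist $E^{d}$, where $K=\Q(\sqrt d)$, which become isomorphic over $K$: writing $\sigma$ for the nontrivial automorphism of $K$, every $P\in E(K)$ satisfies $2P=(P+P^{\sigma})+(P-P^{\sigma})$ with $P+P^{\sigma}\in E(\Q)$ and $P-P^{\sigma}$ in the image of $E^{d}(\Q)$ under the twisting isomorphism; and if $P$ has prime order $p$, then either $P^{\sigma}\notin\langle P\rangle$, forcing $E[p]=\langle P,P^{\sigma}\rangle\subseteq E(K)$ and hence $\mu_{p}\subseteq K$ (so $p\le 3$), or $\langle P\rangle$ is $\Gal(\Qbar/\Q)$-stable with Galois acting through a character of $\Gal(K/\Q)$ of order at most $2$ --- trivial, so $P\in E(\Q)$, or the quadratic character of $K$, so $P$ is a rational point of $E^{d}$. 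In each case a quadratic twist of $E$, still defined over $\Q$, acquires the relevant point, so Mazur's torsion theorem \cite{mazur2} applies. From this, $\cC_{11}$ and $\cC_{13}$ are impossible, since a point of such a prime order over $K$ forces either $\mu_{p}\subseteq K$ or a point of order $p$ on a rational elliptic curve, both absurd. For $\cC_{14}$: after possibly replacing $E$ by $E^{d}$ (harmless, as $E\cong E^{d}$ over $K$), a point of order $14$ over $K$ forces $\cC_{7}\subseteq E(\Q)_{\tors}$, hence $E(\Q)_{\tors}=\cC_{7}$ by Mazur and in particular $E(\Q)[2]=0$, whereas the double of our point is then rational while its $\sigma$-difference is a nonzero $\sigma$-fixed point of $E[2]$, i.e.\ a nonzero rational $2$-torsion point --- a contradiction. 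For $\cC_{18}$: again after possibly replacing $E$ by $E^{d}$, the $9$-part of a point of order $18$ forces either $E[3]\subseteq E(K)$ --- whence $K=\Q(\sqrt{-3})$ and $E(K)_{\tors}$ contains $\cC_{3}\times\cC_{9}$, which is not in \Cref{svi} --- or $E(\Q)_{\tors}=\cC_{9}$, and the same $\sigma$-difference argument again yields a nonzero rational $2$-torsion point. The group $\cC_{16}$ escapes this net precisely because $(\Z/16)^{\times}$ contains involutions other than $-1$, so the ``twist it away'' step is unavailable, and because $X_{0}(16)$ has genus $0$.

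The delicate case is $\cC_{15}$: it does occur, but only finitely often. If $E/\Q$ has a point $P$ of order $15$ over $K$, then $\langle P\rangle$ must be $\Gal(\Qbar/\Q)$-stable, for otherwise $E(K)$ would contain $E[5]$ (impossible, since $\mu_{5}\not\subseteq K$) or $E[3]$ together with the $5$-part of $P$, giving $\cC_{3}\times\cC_{15}\subseteq E(K)$, against \Cref{svi}. Hence $E$ admits a $\Q$-rational cyclic $15$-isogeny, so $j(E)$ is the $j$-invariant of one of the finitely many non-cuspidal rational points of the genus-one curve $X_{0}(15)$; up to quadratic twist this leaves finitely many curves. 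For each of them the Galois character $\psi\colon\Gal(\Qbar/\Q)\to(\Z/15)^{\times}$ on the isogeny kernel is nontrivial of order at most $2$ --- nontrivial because $\cC_{15}\not\subseteq E(\Q)$, of order at most $2$ because $P$ is defined over the quadratic field $K$ --- and it cannot be the quadratic character of $K$ itself, since twisting by that character would make $\psi$ trivial and put $\cC_{15}$ over $\Q$. Consequently exactly one of the mod-$3$ and mod-$5$ components of $\psi$ is the quadratic character of $K$ and the other is trivial; this pins down, for each admissible $j$, finitely many twists together with the field $K$, and a direct check singles out the four pairs in the statement (the curves 50b1 and 50a3 over $\Q(\sqrt 5)$, and 50b2 and 450b4 over $\Q(\sqrt{-15})$). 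I expect this to be the main obstacle: it requires the explicit knowledge that $X_{0}(15)(\Q)$ is finite together with its non-cuspidal $j$-invariants, and then careful bookkeeping of quadratic twists and of the quadratic field over which each surviving curve acquires its $15$-torsion.

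It remains to realise each remaining group for infinitely many pairs $(E,K)$. For the groups that already appear as $E(\Q)_{\tors}$ for infinitely many $E/\Q$ --- by Mazur, $\cC_{n}$ with $n\le 10$ or $n=12$, and $\cC_{2}\times\cC_{2n}$ with $n\le 4$ --- one fixes such an $E$ and observes that any quadratic $K$ over which the torsion grows is generated by a point of $E[N_{0}]$, where $N_{0}$ is a fixed integer coming from the uniform bound of \Cref{svi}; as there are only finitely many such points, $E(K)_{\tors}=E(\Q)_{\tors}$ for all but finitely many $K$, yielding infinitely many pairs. For $\cC_{2}\times\cC_{10}$ and $\cC_{2}\times\cC_{12}$ one takes $E/\Q$ with $\cC_{10}$, respectively $\cC_{12}$, over $\Q$ (infinitely many, by Mazur): its rational $2$-torsion is then exactly $\cC_{2}$, and base change to its $2$-division field, which is quadratic, produces the full $2$-torsion and hence $\cC_{2}\times\cC_{10}$, respectively $\cC_{2}\times\cC_{12}$, which by \Cref{svi} grows no further. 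For $\cC_{3}\times\cC_{3}$ and $\cC_{3}\times\cC_{6}$ one uses an infinite family of $E/\Q$ with $E[3]\subseteq E(\Q(\sqrt{-3}))$ --- equivalently, with mod-$3$ representation split reducible with eigencharacters the trivial character and the mod-$3$ cyclotomic character, a genus-$0$ moduli condition realised for instance by $y^{2}=x^{3}+16$ --- optionally also carrying a rational $2$-torsion point; $\cC_{4}\times\cC_{4}$ is handled analogously over $\Q(i)$; and $\cC_{16}$ follows from an explicit infinite family of rational elliptic curves acquiring a point of order $16$ over a quadratic field, cf.\ \cite{bbdn}. In every case one checks against \Cref{svi} that the base change introduces no larger torsion group.
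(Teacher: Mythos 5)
This statement is quoted in the paper as an external result (Theorem~2 of \cite{najman}); the paper itself contains no proof of it, so there is nothing internal to compare your argument against. Judged on its own, your reconstruction is sound and follows what is essentially the standard route: start from the Kenku--Momose--Kamienny list (\Cref{svi}), and for a point of prime order $p>3$ over $K$ use that its span is $\Gal(\Qbar/\Q)$-stable (else $\mu_p\subseteq K$), so that after a quadratic twist Mazur applies; your eliminations of $\cC_{11}$, $\cC_{13}$, $\cC_{14}$, $\cC_{18}$ (the last two via the observation that $P-P^\sigma$ becomes a nonzero rational $2$-torsion point once the odd part is made rational, contradicting $E(\Q)_{\tors}=\cC_7$ or $\cC_9$) are correct, as is the reduction of the $\cC_{15}$ case to the finitely many non-cuspidal points of $X_0(15)(\Q)$ together with the character bookkeeping that pins down the twist and the field. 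The only genuine incompleteness is at the two places you yourself flag: the final identification of the four curves 50b1, 50a3, 50b2, 450b4 and their fields is asserted rather than carried out (and it is precisely this explicit list that the theorem's last sentence claims), and the infinite family realizing $\cC_{16}$ is cited rather than exhibited. Both are finite, well-specified verifications to which your argument correctly reduces the problem, so I would call this a correct proof skeleton with the computational endgame outsourced, rather than a flawed argument.
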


Bosman, Bruin, Dujella and Najman \cite[Theorem 4.1]{bbdn} proved that all elliptic curves over quadratic fields with torsion $\cC_{13}$, $\cC_{16}$ and $\cC_{18}$ are $\Q$-curves. Moreover, in \cite[Theorem 1.1.]{BN2} it is shown that all elliptic curves with torsion $\cC_{16}$ over quadratic fields are base changes of elliptic curves over $\Q$. Recall that there exist infinitely many elliptic curves with these torsion groups.

Combining these results, we see that all that remains is to show that $\cC_{11}$ does not appear, that $\cC_{14}$ does appear, but only finitely often, and that $\cC_{15}$ appears finitely often.

The relationship between $\Q$-curves and central $\Q$-curves is nicely explained in the following proposition.
\begin{proposition}\cite[Proposition 1.3.]{lf}
\label{prop:central}
For every $\Q$-curve $E$ without CM defined over a number field $K$, there exists an isogeny $\psi:E\rightarrow E'$ towards a central $\Q$-curve $E'$ defined over $K$ such that the degree of $\psi$ divides $d_\sigma$ and is squarefree.
\end{proposition}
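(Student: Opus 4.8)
The plan is to build $E'$ explicitly as a quotient $E/C$, where $C$ is assembled from the ``square part'' of the kernel of $\phi_\sigma$, using $\End_{\overline{\Q}}(E)=\Z$ throughout. After replacing $K$ by its Galois closure over $\Q$ (harmless: all the subgroups constructed below are $\Gal(\overline{\Q}/K)$-stable, so a descent at the end returns $E'$ to the original field), we may assume $K/\Q$ is Galois and that for each $\tau$ there is an isogeny $\phi_\tau\colon E\to E^\tau$ over $K$. Since $\End_{\overline{\Q}}(E)=\Z$, the degree map $\tau\mapsto\deg\phi_\tau\bmod(\Q^*)^2$ is a homomorphism (so the squarefree part of $d_\sigma$ is an isogeny invariant, and a central curve of that degree is the best one can hope for), and moreover $\widehat{\phi_\sigma}=\pm\,\phi_\sigma^{\sigma}$ — because $(\phi_\sigma^{\sigma})^{-1}\circ\widehat{\phi_\sigma}$ is a quasi-isogeny of degree $1$ on $E^\sigma$, hence $\pm1$. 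Consequently $\phi_\sigma(E[d_\sigma])=\ker\widehat{\phi_\sigma}=(\ker\phi_\sigma)^{\sigma}$; writing $\ker\phi_\sigma[p^\infty]=\langle P\rangle$ and completing to a basis $\langle P,Q\rangle$ of $E[p^{e}]$, where $e=v_p(d_\sigma)$, this says $\phi_\sigma(Q)$ generates $(\ker\phi_\sigma[p^\infty])^{\sigma}$.

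First I would make $\ker\phi_\sigma$ cyclic: if $\ker\phi_\sigma[p^\infty]\cong\Z/p^{a}\times\Z/p^{b}$ with $b\ge1$, it contains $E[p^{b}]$ (the unique copy of $(\Z/p^{b})^2$ in $E[p^{a+b}]$), so $\phi_\sigma=\phi'\circ[p^{b}]$ with $\deg\phi'=d_\sigma/p^{2b}$; iterating over all primes, one may assume $\ker\phi_\sigma$ cyclic (this leaves $E$ unchanged and shrinks $d_\sigma$).

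Then comes the main step: peeling off one prime at a time. Fix $p$ with $e=v_p(d_\sigma)\ge2$, let $C$ be the subgroup of order $p^{\lfloor e/2\rfloor}$ of $\ker\phi_\sigma[p^\infty]\cong\Z/p^{e}$, and put $E':=E/C$, $\psi\colon E\to E'$ the quotient (degree $p^{\lfloor e/2\rfloor}$). From $\phi_\sigma(Q)$ generating $(\ker\phi_\sigma[p^\infty])^\sigma$ one gets $C^{\sigma}=\phi_\sigma(\langle p^{e-\lfloor e/2\rfloor}Q\rangle)$, whence $\phi_\sigma^{-1}(C^{\sigma})$ has $p$-part $\langle P\rangle\times\langle p^{e-\lfloor e/2\rfloor}Q\rangle\cong\Z/p^{e}\times\Z/p^{\lfloor e/2\rfloor}$ and is cyclic away from $p$. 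Since $C\subseteq\langle P\rangle\subseteq\phi_\sigma^{-1}(C^{\sigma})$, the composite $E\xrightarrow{\phi_\sigma}E^{\sigma}\to E^{\sigma}/C^{\sigma}=(E')^{\sigma}$, whose kernel is exactly $\phi_\sigma^{-1}(C^{\sigma})$, descends to $\bar g\colon E'\to(E')^{\sigma}$ of degree $d_\sigma$ with $p$-part of $\ker\bar g$ isomorphic to $\Z/p^{e-\lfloor e/2\rfloor}\times\Z/p^{\lfloor e/2\rfloor}$. This contains $E'[p^{\lfloor e/2\rfloor}]$, so $\bar g$ factors through $[p^{\lfloor e/2\rfloor}]$, producing an isogeny $E'\to(E')^{\sigma}$ of \emph{cyclic} kernel and degree $d_\sigma/p^{2\lfloor e/2\rfloor}$, i.e.\ with $p$-valuation $e\bmod 2\le1$ and all other valuations unchanged. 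So $E'$ is a $\Q$-curve whose degree has lost its square part at $p$.

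Iterating over the finitely many primes with $v_p(d_\sigma)\ge2$, one reaches a $\Q$-curve $E'$, $K$-isogenous to $E$, carrying an isogeny to $(E')^{\sigma}$ of squarefree degree $\prod_{p}p^{\,v_p(d_\sigma)\bmod 2}$ — that is, $E'$ is central — while the total isogeny $\psi\colon E\to E'$ has degree $\prod_{p}p^{\lfloor v_p(d_\sigma)/2\rfloor}$, which divides $d_\sigma$ and is squarefree. \emph{The main obstacle} is the kernel bookkeeping in the peeling step: one must pin down the exact group structure of $\phi_\sigma^{-1}(C^{\sigma})$, and this is precisely where the non-CM identity $\phi_\sigma(E[d_\sigma])=(\ker\phi_\sigma)^{\sigma}$ together with the cyclicity arranged beforehand are indispensable — lacking them, the induced map on $E/C$ need not factor through multiplication by $p^{\lfloor e/2\rfloor}$, the degree need not drop, and one cannot conclude that the target is central.
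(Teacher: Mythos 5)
The paper itself offers no proof of this proposition --- it is imported wholesale from \cite[Proposition 1.3]{lf} --- so there is nothing internal to compare against; I can only assess your argument on its own terms. Your construction is the standard Elkies-type reduction (splitting off ``half'' of the $p$-part of $\ker\phi_\sigma$ for each prime $p$ with $v_p(d_\sigma)\geq 2$), and the core bookkeeping is correct: the identity $\widehat{\phi_\sigma}=\pm\phi_\sigma^{\sigma}$ (note this uses $\sigma^2=1$, i.e.\ $K$ quadratic, which is all the paper needs), the determination of $\phi_\sigma^{-1}(C^{\sigma})$, and the factorization of $\bar g$ through $[p^{\lfloor e/2\rfloor}]$ all check out and do yield a central $\Q$-curve $E'$ of degree $\prod_p p^{v_p(d_\sigma)\bmod 2}$ together with an isogeny $\psi$ of degree $\prod_p p^{\lfloor v_p(d_\sigma)/2\rfloor}$. (You should still justify that $C$ is $\Gal(\overline{\Q}/K)$-stable, so that $E'$ and $\psi$ are defined over $K$; this follows because $\ker\phi_\sigma$ is $\Gal(\overline{\Q}/K)$-stable, by uniqueness up to sign of the minimal cyclic isogeny $E\to E^{\sigma}$ in the non-CM case.)

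The genuine gap is your last sentence: $\prod_p p^{\lfloor v_p(d_\sigma)/2\rfloor}$ is squarefree only when $d_\sigma$ is fourth-power-free, and you give no reason why $v_p(d_\sigma)\leq 3$ may be assumed. Moreover this is not a defect of your particular construction that a cleverer choice of $C$ would repair: in the $p$-isogeny tree of the $\overline{\Q}$-isogeny class (a tree, since $E$ has no CM), $E$ and $E^{\sigma}$ lie at distance $e=v_p(d_\sigma)$, any central $\Q$-curve $F$ satisfies $d(F,F^{\sigma})\leq 1$, and since $\sigma$ acts by isometries the triangle inequality gives $e\leq 2\,d(E,F)+1$, i.e.\ $d(E,F)\geq\lfloor e/2\rfloor$; so for $e\geq 4$ \emph{every} isogeny from $E$ to a central $\Q$-curve has degree divisible by $p^2$. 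Your proof therefore establishes only the weaker conclusion that $\deg\psi$ divides $d_\sigma$ with $(\deg\psi)^2\mid d_\sigma$ --- which, to be fair, is exactly what the paper actually uses (the relation $d_\sigma=d^2d'$ in the proof of Proposition \ref{prop:11} and the degree-divisibility arguments in Propositions \ref{prop:15} and \ref{prop:14}) --- but it does not prove the squarefreeness of $\deg\psi$ asserted in the statement, and you should either add the hypothesis that $d_\sigma$ is fourth-power-free or flag that the conclusion must be weakened accordingly.
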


Sairaji and Yamauchi proved the following result (which we specialize to quadratic fields) that we will find useful.

\begin{tm}
\label{thm:st}
\cite[Proposition 3.4.]{ST}
Let $E$ be a central $\Q$-curve of degree $d_\sigma$ defined over a quadratic field $K$ such that $j(E)\not \in \Q$ with a point of prime order $N$ in $E(K)$. If $N$ divides $d_\sigma$, then $N$ is either $2$ or $3$.
\end{tm}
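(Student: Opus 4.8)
The plan is to treat the CM and non-CM cases separately, the latter by a Galois-theoretic argument comparing the action of $G_K:=\Gal(\overline\Q/K)$ on $\ker\phi_\sigma$ with its action on $\phi_\sigma(E[N])$. If $E$ has CM, then $j(E)\notin\Q$ forces $K=\Q(j(E))$, so the CM order has class number $2$; there are only finitely many such orders, and for each the torsion of a CM elliptic curve over a quadratic field is completely known (cf. \cite{clark}), whence $N\in\{2,3\}$. So assume henceforth that $E$ is non-CM. Then $\ker\phi_\sigma$ is $G_K$-stable: for $\tau\in G_K$ the conjugate $\phi_\sigma^{\tau}$ is again an isogeny $E\to E^\sigma$, and since $E$ and $E^\sigma$ are isogenous non-CM curves $\Hom(E,E^\sigma)$ is infinite cyclic, so comparing degrees gives $\phi_\sigma^{\tau}=\pm\phi_\sigma$ and hence $\tau(\ker\phi_\sigma)=\ker\phi_\sigma$. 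As $d_\sigma$ is squarefree, $\ker\phi_\sigma$ is cyclic of order $d_\sigma$; writing $d_\sigma=Nm$ with $\gcd(N,m)=1$, let $C_N\subseteq\ker\phi_\sigma$ be its unique (hence $G_K$-stable) subgroup of order $N$.

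Set $D_N:=\phi_\sigma(E[N])\subseteq E^\sigma[N]$, which has order $N$ since $\ker\phi_\sigma\cap E[N]=C_N$, and compute the character $\psi$ through which $G_K$ acts on $D_N$ in two ways. First, the analogous $\Hom$-cyclicity argument applied to $\Hom(E^\sigma,E)$ gives $\phi_\sigma^{\sigma}=\pm\widehat{\phi_\sigma}$, so conjugation by $\sigma$ carries $\ker\phi_\sigma$ to $\ker\widehat{\phi_\sigma}$ (well defined as $\ker\phi_\sigma$ is $G_K$-stable) and hence $C_N$ to $D_N$; thus $\psi=\chi^{\sigma}$, where $\chi$ is the character through which $G_K$ acts on $C_N$. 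Now $\chi$ is trivial if $C_N=\langle P\rangle$ (because $P$ is $G_K$-fixed), and $\chi=\chi_N|_{G_K}$ (the mod-$N$ cyclotomic character) if $C_N\neq\langle P\rangle$, since then $E[N]=\langle P\rangle\oplus C_N$ as $G_K$-modules with $\langle P\rangle$ fixed and $\det\bar\rho_{E,N}=\chi_N$; in either case $\chi$ is the restriction of a character of $G_\Q$, so $\chi^{\sigma}=\chi$. Second, factor $\phi_\sigma=\alpha\circ\pi$ with $\pi\colon E\to E/\ker\phi_\sigma$ defined over $K$ and $\alpha$ an isomorphism onto $E^\sigma$ that, being a twist of a non-CM curve, is quadratic and defined over some $K(\sqrt D)$; then $D_N=\alpha(\pi(E[N]))$, with $\pi(E[N])\cong E[N]/C_N$ carrying the $G_K$-action $\chi_N\chi^{-1}$, and twisting by $\alpha$ multiplies by the quadratic character $\epsilon_D$ of $K(\sqrt D)/K$, so $\psi=(\chi_N|_{G_K})\cdot\chi^{-1}\cdot\epsilon_D$. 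Equating the two expressions and using $\chi^2=1$ in the first case, $\chi=\chi_N|_{G_K}$ in the second, both possibilities collapse to $\chi_N|_{G_K}=\epsilon_D$, a $\{\pm1\}$-valued character. Since $\chi_N(G_\Q)=\F_N^{\times}$ has order $N-1$ and $[K:\Q]=2$, this forces $N-1\le 4$, i.e. $N\le 5$, with $N=5$ forcing in addition $K=\Q(\sqrt5)$.

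It remains to rule out $N=5$, where necessarily $K=\Q(\sqrt5)$, the isogeny $\phi_\sigma$ is defined over $\Q(\zeta_5)$ but not over $K$, and $E$ acquires full $5$-torsion over $\Q(\zeta_5)$. This last case seems to genuinely require the hypothesis $j(E)\notin\Q$ — for instance it should exclude $E$ being a base change, whose possible torsion over $\Q(\sqrt5)$ is pinned down by \Cref{tm:bc} (the curves with $\cC_5$ or $\cC_{15}$ torsion there being explicit) — together with a hands-on analysis of the (low-genus) modular curve parametrising central $\Q$-curves of degree $5m$ equipped with a marked $K$-rational point of order $5$. I expect this $N=5$ endgame, together with the bookkeeping throughout of the quadratic twist $D$ relating $E^\sigma$ to $E/\ker\phi_\sigma$, to be the main obstacle; the remainder of the argument is soft.
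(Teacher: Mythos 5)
First, a point of comparison: the paper does not prove this statement at all --- it is imported verbatim as \cite[Proposition 3.4]{ST}, so there is no in-paper argument to measure yours against; the relevant benchmark is Sairaiji--Yamauchi's own proof. Your reconstruction of the non-CM core is sound as far as it goes: the $G_K$-stability of $\ker\phi_\sigma$, the identification $\sigma(C_N)=\phi_\sigma(E[N])$ via $\phi_\sigma^{\sigma}=\pm\widehat{\phi_\sigma}$, and the two computations of the character on $D_N$ (one by conjugation, one via the factorization $\phi_\sigma=\alpha\circ\pi$ and the quadratic twist $\epsilon_D$) are all correct, and they do force $\chi_N|_{G_K}$ to be at most quadratic, hence $N\leq 5$ with $K=\Q(\sqrt{5})$ when $N=5$.

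The genuine gap is exactly where you flag it: the case $N=5$ is not eliminated, and that case is the entire content of the theorem beyond ``$N\in\{2,3,5\}$''. Your proposed endgame is not an argument but a list of things one might try (``seems to genuinely require'', ``I expect''), and the specific suggestions are not obviously sufficient --- for instance, a curve over $\Q(\sqrt{5})$ with $E[5]=\langle P\rangle\oplus C_5$ and both characters as computed corresponds to a point on a modular curve of level $25$ (or $5d_\sigma$), and $X_0(25)$ has genus $0$, so finiteness is not automatic and one really must exploit the $\Q$-curve structure (the compatibility $C_5^{\sigma}=\phi_\sigma(C_5)$ and the Atkin--Lehner description of the point on $X_0^{+}$) to get a contradiction. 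Until that is done the proof is incomplete. A secondary, smaller issue: your CM case asserts that the classification in \cite{clark} yields $N\in\{2,3\}$, but over quadratic fields CM curves can have points of order $5$ and $7$ (torsion $\cC_7$ and $\cC_{10}$ occur in that classification); you would need to check that these only arise with $j\in\Q$, or else also invoke the divisibility $N\mid d_\sigma$ in the CM setting, where the $\Hom$-cyclicity arguments you rely on elsewhere are unavailable.
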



We will also need the following result of Le Fourn.

\begin{proposition}\cite[Proposition 2.3]{lf}
\label{tm:lf}
Let $K$ be a quadratic field and $E/K$ a $\Q$-curve of degree $d_\sigma$. Let $p\geq 11$, $p\neq 13,17,41$, where $p$ is a prime not dividing $d_\sigma$, such that the mod $p$ Galois representation attached to $E$ is reducible. Then $E$ has potentially good reduction at all prime ideals of $\OO_K$.
\end{proposition}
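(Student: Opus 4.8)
The plan is to argue by contraposition. An elliptic curve over a local field has either potentially good or potentially multiplicative reduction, so it suffices to prove: if $E$ has potentially multiplicative reduction at some prime $\mathfrak q$ of $\OO_K$ and $\bar\rho_{E,p}$ is reducible, then $p\in\{13,17,41\}$ (recall we are given $p\ge 11$ and $p\nmid d_\sigma$). Two preliminary reductions make life easier. First, a CM elliptic curve has integral $j$-invariant, hence potentially good reduction at every prime, so we may assume $E$ is not CM. Second, potentially good reduction (at a given place) is an isogeny invariant, and reducibility of $\bar\rho_{E,p}$ is preserved under any $K$-isogeny of degree prime to $p$; since the isogeny furnished by \Cref{prop:central} has degree dividing $d_\sigma$ and squarefree (hence prime to $p$), we may assume $E$ is a central $\Q$-curve, with $d_\sigma$ squarefree and still prime to $p$. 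It will be convenient to split the remaining argument according to whether $j(E)\in\Q$ or $j(E)\notin\Q$.

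Next I would set up the isogeny character. Reducibility means $E$ has a $G_K$-stable line $C\subseteq E[p]$; write $\lambda\colon G_K\to\F_p^\times$ for the character giving the action on $C$, so that $\lambda\lambda'=\bar\chi_p$, where $\lambda'$ is the character on $E[p]/C$ and $\bar\chi_p$ the mod $p$ cyclotomic character. The local behaviour of $\lambda$ is then forced in two places. At $\mathfrak q$ (and at any prime of potentially multiplicative reduction away from $p$), the Tate parametrization gives $\bar\rho_{E,p}|_{G_{K_{\mathfrak q}}}$ as an extension of $\eta$ by $\bar\chi_p\eta$ with $\eta^2=1$, and since $\bar\chi_p$ is unramified there, $\lambda|_{I_{\mathfrak q}}=\eta|_{I_{\mathfrak q}}$ is at most quadratic. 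At a prime $\mathfrak p\mid p$, Serre's description of the restriction of $\bar\rho_{E,p}$ to inertia forces $\lambda^{12}|_{I_{\mathfrak p}}$ to equal an explicit power of $\bar\chi_p|_{I_{\mathfrak p}}$, the exponent depending only on the reduction type of $E$ at $\mathfrak p$.

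The key input is the $\Q$-curve structure. Because $p\nmid d_\sigma$, the isogeny $\phi_\sigma\colon E\to E^\sigma$ is an isomorphism on $p$-torsion, so $C^\sigma$ corresponds to one of the two $G_K$-stable lines of $E[p]$; comparing the isogeny characters of $E$ and $E^\sigma$ shows that, up to a controlled quadratic twist, either $\lambda^\sigma=\lambda$ or $\lambda^\sigma=\bar\chi_p\lambda^{-1}$. Equivalently, $\bar\rho_{E,p}^{\mathrm{ss}}|_{G_K}$ extends to $G_\Q$, either reducibly (first case) or as the induction of $\lambda$ (second case). Translated geometrically, in the first case $E$ gives, after $K$-isogeny and twisting, a $\Q$-rational point of an $X_0(p)$-type curve with $j$ having a pole, and in the second case one lands on a $\Q$-rational point of a modular curve of Cartan-normalizer type (or, keeping track of $d_\sigma$, of a quotient of $X_0(pd_\sigma)$ by a group of Atkin--Lehner involutions) that reduces to a cusp modulo the rational prime below $\mathfrak q$. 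Feeding this cuspidal $\Q$-point together with the local constraints above into a class-number / Eisenstein-quotient estimate, or — on the geometric side — into Runge's method for modular curves in the spirit of Bilu--Parent and of the author's own work, forces $p$ below an explicit bound; the finitely many surviving primes are then eliminated by hand, using Mazur's classification of rational isogenies for the strand $j(E)\in\Q$ and the known (or partially known) determination of rational points on the relevant curves $X_0^+(p)$ and $X_{\mathrm{split}}^+(p)$, $X_{\mathrm{ns}}^+(p)$ for the strand $j(E)\notin\Q$, and the outcome is that precisely $p=13,17,41$ are not excluded.

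I expect the genuine difficulty to lie in that last step: producing an effective and sharp enough bound on $p$. The local and isogeny-character bookkeeping is essentially formal once organised correctly, and the descent to $G_\Q$ is standard $\Q$-curve theory, but turning \emph{``there is a $\Q$-rational cuspidal point on this quotient modular curve''} into \emph{``$p$ is small''} is exactly what Runge's method is for, and the delicate point is to make it uniform in the auxiliary squarefree integer $d_\sigma$ — that is, to guarantee that the quotients $X_0(pd_\sigma)/\langle w_d : d\mid d_\sigma\rangle$ always carry enough rational (or Galois-orbit-stable) cusps for Runge's inequality to bite — and then to carry out the finite but not entirely routine case analysis that isolates precisely the set $\{13,17,41\}$.
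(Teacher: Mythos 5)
First, a point of order: the paper does not prove this proposition at all --- it is imported verbatim as \cite[Proposition 2.3]{lf} and used as a black box in the proof of \Cref{prop:11} --- so there is no in-paper argument to compare yours against; I can only measure your sketch against what a proof of this statement must contain. With that caveat, your architecture is the standard Mazur--Momose one that \cite{lf} follows: contrapositive, discard CM, reduce to central $\Q$-curves via \Cref{prop:central}, introduce the isogeny character $\lambda$ of the stable line, control $\lambda$ on inertia at a potentially multiplicative prime (quadratic, via the Tate curve) and at primes above $p$ (explicit powers of $\bar\chi_p$ on $\lambda^{12}$), and use $p\nmid d_\sigma$ to compare $\lambda$ with its $\sigma$-conjugate. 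That skeleton is sound.

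The genuine gap is everything after ``comparing the isogeny characters,'' which is exactly where the content of the proposition lives. You offer two alternative continuations (``class-number / Eisenstein-quotient estimate'' versus ``Runge's method on quotients of $X_0(pd_\sigma)$'') without committing to either, carry out no computation that actually bounds $p$, and supply no mechanism that could output the specific exceptional set $\{13,17,41\}$ rather than some other finite set --- you concede as much in your last paragraph. The step that is missing is Momose-style: a $\sigma$-invariant product such as $\lambda^{12}\cdot(\lambda^{12})^{\sigma}$ extends to a character of $G_{\Q}$ unramified outside $p$, hence is a power $\bar\chi_p^{\,12a}$ for one of finitely many ``types'' $a$; each type is then killed either by producing a $\Q$-rational point of $Y_0(p)$ with non-integral $j$-invariant (possible for $p\ge 11$ only for $p\in\{13,17\}$, by \cite{mazur2}) or by a congruence obtained by evaluating the character at Frobenius elements of small split primes, the primes $13,17,41$ being precisely those that survive. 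None of this bookkeeping appears in your sketch. A secondary error: your geometric translation of the second case ($\lambda^{\sigma}$ matching the complementary character) to ``a modular curve of Cartan-normalizer type'' is wrong --- Cartan-normalizer curves classify \emph{irreducible} non-surjective images, whereas here the data stays reducible and one lands on a $\Q$-point of an Atkin--Lehner quotient of $X_0(p)$ (or $X_0(pd_\sigma)$), not of $X_{\mathrm{split}}^{+}(p)$ or $X_{\mathrm{ns}}^{+}(p)$.
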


We will make use of which modular curves $X_0(n)$ have infinitely many quadratic points when proving the finiteness of quadratic $\Q$-curves with $\cC_{14}$ and $\cC_{15}$ torsion.

\begin{tm}\cite[Theorem 4.3]{bars}
\label{tm:bars}
If $X_0(n)$ is of genus $\geq 2$ and has infinitely many quadratic points then $n$ is one of the following values:
$$22,23,26,28,29,30,31,33,35,37,39,40,41,43,46,47,48,50,53,59,61,65,71,79,83,89,101\text { or } 131.$$

\end{tm}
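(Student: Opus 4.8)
The plan is to deduce the statement from a general geometric criterion for the infinitude of quadratic points, combined with two known classification results about $X_0(n)$ and a finite Mordell--Weil rank computation. The criterion (due in essence to Harris--Silverman, and ultimately a consequence of Faltings' theorem on rational points of subvarieties of abelian varieties) is: for a smooth projective curve $C/\Q$ of genus $g\geq 2$, the set of points of degree $\leq 2$ on $C$ is infinite if and only if either $C$ is hyperelliptic over $\Q$ (admits a degree $2$ map to $\mathbb{P}^1$) or $C$ admits a degree $2$ $\Q$-morphism to an elliptic curve $E/\Q$ with $\rk E(\Q)>0$. The ``if'' direction is elementary: pull back the infinitely many rational points of $\mathbb{P}^1$, resp. of $E$, to obtain infinitely many quadratic points. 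For ``only if'' one studies the symmetric square $C^{(2)}=\Sym^2 C$: a quadratic point, or an unordered pair of rational points, is a $\Q$-point of $C^{(2)}$, so $C^{(2)}(\Q)$ is infinite; composing with the Abel--Jacobi map $C^{(2)}\to \mathrm{Jac}(C)$ --- whose fibres are the linear systems $|D|$, positive-dimensional only for special $D$, which forces $C$ hyperelliptic --- and applying Faltings' theorem, one finds that the image of $C^{(2)}$ in $\mathrm{Jac}(C)$ contains a translate of a positive-dimensional abelian subvariety with infinitely many rational points; tracing this back through $C^{(2)}$ produces the required degree $2$ map.

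Granting the criterion, I would restrict once and for all to the (explicit, finite) set of $n$ for which $X_0(n)$ has genus $\geq 2$, and then split according to the two alternatives. For the hyperelliptic alternative I would invoke Ogg's classification of hyperelliptic modular curves: $X_0(n)$ is hyperelliptic exactly for $n\in\{22,23,26,28,29,30,31,33,35,37,39,40,41,46,47,48,50,59,71\}$. Each of these has infinitely many quadratic points (pull back $\mathbb{P}^1(\Q)$), and one checks directly that every such $n$ already appears in the asserted list; in particular every $X_0(n)$ of genus exactly $2$ is covered, since genus $2$ curves are automatically hyperelliptic.

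It then remains to treat the $n$ with $X_0(n)$ of genus $\geq 2$ but non-hyperelliptic, where the only way to get infinitely many quadratic points is through a bielliptic structure with positive-rank target. Here I would use Bars' classification of bielliptic $X_0(n)$: going through the Atkin--Lehner involutions $w_d$ (and the finitely many exceptional automorphisms of $X_0(n)$) and bounding the genus of the quotients by Riemann--Hurwitz yields an explicit finite list of bielliptic levels. For each such non-hyperelliptic level I would identify the elliptic curves arising as quotients $X_0(n)/w$ and read off their rank over $\Q$ from the elliptic curve tables: if some quotient has positive rank, then $n$ must be included, and if every bielliptic quotient has rank $0$, then by the criterion $X_0(n)$ has only finitely many quadratic points and $n$ is excluded. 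Carrying this out leaves precisely the additional values $43,53,61,65,79,83,89,101,131$ (whose relevant Atkin--Lehner quotients are elliptic curves of rank $\geq 1$), which together with Ogg's list give exactly the set in the statement.

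The main obstacle is the delicate part of the Harris--Silverman criterion, namely the case in which the positive-dimensional abelian subvariety furnished by Faltings has dimension $\geq 2$: one must show that such a configuration in $\mathrm{Jac}(C)$ still forces $C$ to be hyperelliptic or bielliptic, rather than allowing infinitely many quadratic points without any degree $2$ map --- this is exactly where the geometry of the image of $C^{(2)}$ and of the Abel--Jacobi fibres is essential, and where $g=2$ must be isolated as a genuinely special case. On the computational side, the real labor is twofold: first, making Bars' enumeration of bielliptic structures on the genus $\geq 2$ curves $X_0(n)$ exhaustive, since one needs \emph{all} degree $2$ maps and not merely those coming from Atkin--Lehner involutions; and second, determining the Mordell--Weil ranks of the roughly forty elliptic curves occurring as such quotients, in particular rigorously confirming rank $0$ for the bielliptic levels that must be \emph{excluded} from the list.
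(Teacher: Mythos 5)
The paper does not prove this statement itself but imports it verbatim from Bars \cite{bars}; your proposal is a faithful reconstruction of the proof in that cited source. The route you describe --- the Harris--Silverman/Abramovich--Harris criterion via $\Sym^2 C$, Abel--Jacobi and Faltings; Ogg's list of hyperelliptic $X_0(n)$; the enumeration of bielliptic involutions using the known automorphism groups of $X_0(n)$; and the Mordell--Weil rank computations for the elliptic Atkin--Lehner quotients --- is essentially the same approach.
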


\section{There are no quadratic $\Q$-curves with $\cC_{11}$ torsion.}

Before proceeding with the proof of our results, we prove a general lemma that we will need.
\begin{lemma}\label{lem:izogenija}
Let $F$ be a number field, let $E_1,E_2$ be elliptic curves both defined over $F$ without complex multiplication, and suppose $E_1$ and $E_2$ are isogenous over $\overline F$. Then there exists a quadratic extension $F'/F$ such that $E_1$ and $E_2$ are isogenous over $F'$.
\end{lemma}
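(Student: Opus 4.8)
The plan is to exploit the fact that a non-CM elliptic curve has a very small automorphism group, so that the Galois action on the "set of isogenies $E_1 \to E_2$ of minimal degree" is essentially an action by signs. More precisely, let $\phi: E_1 \to E_2$ be an isogeny defined over $\overline{F}$; we may take $\phi$ to be cyclic of minimal degree $n$ among all isogenies $E_1 \to E_2$ (replacing $E_2$ by an isogenous curve if necessary — but note $E_2$ is fixed here, so instead we take $\phi$ of minimal degree and factor out its kernel's "non-primitive part"). For each $\tau \in \Gal(\overline{F}/F)$, the conjugate $\phi^\tau: E_1^\tau = E_1 \to E_2^\tau = E_2$ is again an isogeny of the same minimal degree $n$, so $\phi^\tau \circ \widehat{\phi^{\phantom{\tau}}}$ (or rather the composition after identifying via the dual) lies in $\End(E_2) \otimes \Q = \Q$, and one checks it is multiplication by an element of norm $n^2$, hence $\phi^\tau = \pm \phi \circ u$ for some automorphism $u$; since $E_1$ has no CM, $u = \pm 1$, so $\phi^\tau = \pm \phi$.

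First I would make this precise: fix $\phi$ cyclic of minimal degree $n$. Then the map $\chi: \Gal(\overline F/F) \to \{\pm 1\}$ sending $\tau$ to the sign with $\phi^\tau = \chi(\tau)\phi$ is a homomorphism (using $(\phi^\tau)^{\rho} = \phi^{\tau\rho}$ and that $\pm 1$ is central), hence a character, hence cut out by a field extension $F'/F$ of degree dividing $2$. Over $F'$ we have $\phi^\tau = \phi$ for all $\tau \in \Gal(\overline F/F')$, which means $\phi$ is defined over $F'$, so $E_1$ and $E_2$ are isogenous over $F'$. If $\chi$ is trivial, $F' = F$ works; otherwise $F'$ is the quadratic extension. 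The one subtlety is why we may assume $\phi$ is \emph{cyclic} of minimal degree and why $\phi^\tau\circ\widehat\phi$ (an endomorphism of $E_2$) must be $\pm n$: if it were $n u$ for a nontrivial automorphism $u$ of $E_2$, then $E_2$ would have extra automorphisms, contradicting non-CM (for $j \neq 0, 1728$ the only automorphisms are $\pm 1$; the cases $j \in \{0,1728\}$ are CM, hence excluded). The cyclicity ensures $\widehat\phi \circ \phi^\tau$ is not divisible by a nontrivial integer that would spoil the norm count, but in fact even without cyclicity the argument goes through: $\phi^\tau$ and $\phi$ have the same degree and $\phi^\tau \circ \widehat\phi \in \Z$ has absolute value $n$, forcing $\phi^\tau \circ \widehat\phi = \pm n$ and thus $\phi^\tau = \pm \phi$ after composing with $\frac{1}{n}$.

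The main obstacle — really the only place needing care — is handling the automorphism group cleanly: one must rule out that $\phi^\tau$ differs from $\phi$ by a genuine extra automorphism of $E_2$ rather than just a sign, and this is exactly where the non-CM hypothesis enters (it forces $\Aut_{\overline F}(E_2) = \{\pm 1\}$). Once that is in place, the reduction to a quadratic character of $\Gal(\overline F/F)$ is formal, and a quadratic character corresponds to a quadratic (or trivial) extension $F'/F$, which is what we want. I would also remark that the same argument shows $F'$ can be taken inside the field of definition of the kernel of $\phi$, which makes the construction explicit if needed later.
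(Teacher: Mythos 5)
Your proposal is correct and follows essentially the same route as the paper: both fix an isogeny $\phi$ of degree $n$, use that $\widehat{\phi^\tau}\circ\phi$ (resp.\ $\phi^\tau\circ\widehat\phi$) lies in $\End = \Z$ by the non-CM hypothesis, deduce $\phi^\tau=\pm\phi$ by comparing degrees, and obtain a quadratic character of $\Gal(\overline F/F)$ whose kernel cuts out the field of definition $F'$ of $\phi$. The only cosmetic difference is that you compose in $\End(E_2)$ rather than $\End(E_1)$ and explicitly note the trivial-character case.
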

\begin{proof}
Let $f_1\in\Hom(E_1,E_2)$ be an isogeny of degree $n$. We can suppose that $f_1$ is cyclic, as otherwise it would be a composition of multiplication-by-$m$  for some $m$ on $E_1$ and some cyclic isogeny. The absolute Galois group group $G_F:=\Gal(\overline F/F)$ acts on $\Hom(E_1,E_2)$ and preserves the degree, i.e $\deg f_1^\sigma =n$ for all $\sigma \in G_F$.

Let $f_2:=f_1^\sigma$. As $\widehat{f_2}\circ f_1\in \End E_1=\Z$, it follows that $\widehat f_2\circ f_1 =[m]_{E_1}$ for some integer $m$. By taking degrees, we see that $m=n$ or $m=-n$. Since the dual isogeny is unique, we have $f_2=f_1$ or $f_2=[-1]\circ f_1$. Hence, we have a homomorphism from $G_F$ into $\{\pm 1\}$. The kernel of this map is an index 2 subgroup of $G_F$, hence is $G_{F'}$ for some quadratic extension $F'/F$. It follows that $f_1$ is defined over $F'$.
\end{proof}

We first deal with the torsion group $\cC_{11}$.
\begin{proposition}
\label{prop:11}
There does not exist a $\Q$-curve with torsion $C_{11}$ over a quadratic field.
\end{proposition}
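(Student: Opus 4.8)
The plan is to exploit that a $K$-rational point of order $11$ makes the mod $11$ Galois representation of $E$ reducible, which is exactly the hypothesis needed to apply \Cref{tm:lf} with $p=11$ (legitimate since $11\geq 11$ and $11\notin\{13,17,41\}$); the only case calling for real work is $11\mid d_\sigma$. So suppose, for contradiction, that $E$ is a $\Q$-curve over a quadratic field $K$ with a point $P\in E(K)$ of order $11$. Then $\langle P\rangle$ is a $\Gal(\overline{\Q}/K)$-stable subgroup of order $11$, so the mod $11$ representation attached to $E$ is reducible and $E$ admits a $K$-rational cyclic $11$-isogeny. If $E$ has complex multiplication, then $j(E)$ is an algebraic integer and hence lies in $\OO_K$; if $11\nmid d_\sigma$, then \Cref{tm:lf} applies with $p=11$ and again yields $j(E)\in\OO_K$; and if $j(E)\in\Q$, then $E^\sigma$ is $\overline{\Q}$-isomorphic to $E$, so $E$ is a $\Q$-curve of degree $1$ and \Cref{tm:lf} applies once more. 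In each of these cases \Cref{tm:int} forbids a point of order $11$ in $E(K)_{\mathrm{tors}}$, a contradiction. Hence from now on $E$ has no CM, $j(E)\notin\Q$, and $11\mid d_\sigma$.

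The next step is to reduce to a central curve. By \Cref{prop:central} there is a $K$-isogeny $\psi\colon E\to E'$ onto a central $\Q$-curve $E'$ whose degree is squarefree and divides $d_\sigma$. Isogenous elliptic curves over $K$ have potentially good reduction at exactly the same primes and have isomorphic rational Tate modules, hence the same semisimplified mod $11$ representation, so $E'$ is again a $\Q$-curve with a $K$-rational $11$-isogeny and reducible mod $11$ representation; were its reduction everywhere potentially good, so would that of $E$ be, and we would be back in the first paragraph, so this is not the case, and the contrapositive of \Cref{tm:lf} applied to $E'$ forces $11\mid d_\sigma'$, where $d_\sigma'$ denotes the degree of $E'$. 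In particular $d_\sigma'>1$, hence $j(E')\notin\Q$, which puts \Cref{thm:st} within reach. It remains to upgrade the $11$-isogeny on a central curve in the isogeny class of $E$ to a genuine $K$-rational point of order $11$, on a curve still of degree divisible by $11$. For this I would track the Galois action on the order-$11$ subgroups along $\psi$: it is trivial on $\langle P\rangle$, and the determinant of the mod $11$ representation is the cyclotomic character, which is nontrivial on $\Gal(\overline{\Q}/K)$ because $[\Q(\mu_{11}):\Q]=10>2$; hence the $\Gal(\overline{\Q}/K)$-stable line $\ell\subset E'[11]$ has either trivial or cyclotomic action. In the first case $\psi(P)$ is already the desired point on $E'$; in the second case the $11$-isogenous curve $E'/\ell$ has a $K$-rational point of order $11$, and a single further application of \Cref{prop:central} (the $11$-adic valuation of the relevant degree now being at most $1$, with \Cref{tm:lf} and \Cref{tm:int} available in case it drops to $0$) reduces to a central $\Q$-curve over a quadratic field with $j\notin\Q$ carrying a $K$-rational point of prime order $11$ dividing its degree. \Cref{thm:st} then forces $11\in\{2,3\}$, which is absurd.

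The main obstacle is precisely the bookkeeping of this last reduction when $11\mid d_\sigma$: one has to verify that the passage to a central $\Q$-curve furnished by \Cref{prop:central} can be organized so as to keep an honest $K$-rational point of order $11$ on a curve whose $\Q$-curve degree is still divisible by $11$, so that \Cref{thm:st} genuinely applies — that is, controlling the Galois characters on the $11$-torsion and ensuring the $11$-adic valuation of the degree does not fall to $0$ at the wrong moment (if it does, \Cref{tm:lf} together with \Cref{tm:int} concludes instead). All the other cases — $E$ with CM, $j(E)\in\Q$, and $11\nmid d_\sigma$ — are handled uniformly by \Cref{tm:lf} and \Cref{tm:int}.
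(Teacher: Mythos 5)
Your opening reductions are sound, and your endgame is a legitimate variant of the paper's: where the paper rules out the CM case via \cite{clark}, the case $j(E)\in\Q$ via quartic-field results, and finishes by analysing the reduction at the prime above $2$, you instead observe that potentially good reduction everywhere (from \Cref{tm:lf}) forces $j(E)\in\OO_K$ and then invoke \Cref{tm:int}, whose list omits $\cC_{11}$. That shortcut works (and is exactly the trick the paper uses later for $\cC_{14}$). The character computation on $E'[11]$ is also correct as far as it goes: the semisimplification is $1\oplus\bar\chi_{11}$, these two characters are distinct on $G_K$ since $[\Q(\zeta_{11}):\Q]=10$, and any stable line carries one of them.

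The genuine gap is the one you flag yourself: in the case $11\mid d_\sigma$ you never actually land on a curve to which \Cref{thm:st} applies. When the stable line of $E'[11]$ is cyclotomic you pass to $E''=E'/\ell$, which has a $K$-rational point of order $11$ but is no longer known to be central, and whose $\Q$-curve degree you do not control: composing $E''\to E'\to (E')^\sigma\to(E'')^\sigma$ gives degree $121\,d_\sigma'$, so the $11$-adic valuation of the minimal degree of $E''$ could a priori be $3$ rather than $1$, and a further application of \Cref{prop:central} may again use an isogeny of degree divisible by $11$ that kills the torsion point — sending you back around the same loop with no visible termination. This is not bookkeeping; it is where the paper needs a substantive Diophantine input that your argument lacks. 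The paper's Claim~3 splits on whether $11$ divides the degree $d$ of the isogeny $E\to E'$: if $11\nmid d$, the order-$11$ \emph{point} (not merely the isogeny) transfers to $E'$, so \Cref{thm:st} applies directly with no character chase; if $11\mid d$, then $d_\sigma=d^2d'$ is divisible by $121$ and $E$ gives a non-CM rational point on $X_0^+(N)$ with $121\mid N$, which is impossible by Momose's theorem \cite{momose}. Without Momose's theorem (or some substitute controlling cyclic $121$-isogenies between $E$ and $E^\sigma$), the branch $11\mid d_\sigma$ of your argument does not close.
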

\begin{proof}
Suppose such a curve, $E/K$ exists, with $E(K)_{tors}\simeq C_{11}$, where $E$ is a $\Q$-curve.  The proof will proceed by proving a series of claims, of which each is used in the next. The strategy is to show that we can assume that $E$ satisfies the assumptions of \Cref{thm:st}, which we then use to show that the curve satisfies the assumptions of \Cref{tm:lf}, which we then use to prove the result.

{\sc Claim 1:} $E$ does not have complex multiplication.\\
This follows from \cite[Section 4]{clark}, as an elliptic curve with CM cannot have a point of order 11 over a quadratic field.\\

{\sc Claim 2:} $j(E)\not \in \Q$.\\
Suppose $j(E) \in \Q$. First note that if $j(E)\in \Q$, then it does not necessarily mean that $E$ is a base change of an elliptic curve defined over $\Q$, but we do know that there exists an elliptic curve $E'/\Q$ such that $j(E)=j(E')$. Since, by Claim 1, $j(E)\neq0$ or $1728$, it follows that $E'$ is a quadratic twist (over $K$) of $E$. 
So $E'=E^d$, for some square-free integer $d$. It follows that $E^d$ has a point of order 11 over a quartic field (since $E$ and $E^d$ become isomorphic over $K(\sqrt d)$), contradicting \cite[Corollary 8.7.]{gn} or \cite[Corollary 1.1.]{loz}, proving the claim.\\

{\sc Claim 3:} $d_\sigma$ is coprime to 11.\\
If $E$ was a central $\Q$-curve, then by Claim 2, it would satisfy the assumptions of \Cref{thm:st}, and hence prove the claim. Suppose $E$ is not a central $\Q$-curve. Then, it is isogenous to a central $\Q$-curve $E'$ by a degree $d|d_\sigma$ isogeny by Proposition \ref{prop:central}. Note that the isogeny in question does not have to be defined over $K$, but this causes no problem because if $E$ and $E'$ are two curves defined over $K$ which do not have complex multiplication and are isogenous over $\overline K$, then $E$ is isogenous over $K$ to a quadratic twist $(E')^d$ of $E'$ by Lemma \ref{lem:izogenija}, and this quadratic twist is also a central $\Q$-curve. Thus, we can assume that $E$ is $K$-isogenous to a central $\Q$-curve $E'$ by an isogeny of degree $d|d_\sigma$.

We have the following isogeny diagram:
$$E\xrightarrow{\psi_1} E' \xrightarrow{\psi_2} (E')^\sigma\xrightarrow{\psi_3} E^\sigma.$$
We may assume all of these isogenies are defined over $K$ after replacing by a twist if necessary.
So, $\psi_1$ and $\psi_3$ which are both of the same degree  $d$, and let $\psi_2$ be of degree $d'$. Then we have $d_\sigma = d^2d'$.

If $11$ does not divide $d$, $E'(K)$ has a point of order 11, and from \Cref{thm:st}, we see that 11 also does not divide $d'$, so $11$ does not divide $d_\sigma$.

Suppose $11$ does divide $d$. But then $E$ corresponds to a non-CM point in $X_0^+(N)(\Q)$, where $N$ is divisible by 121, contradicting \cite[Theorem (0.1) (i)]{momose}, proving the claim.

\subsection{Proof of the proposition}
By what we have proved, $d_\sigma$ is coprime to 11. By \Cref{tm:lf}, $E$ has potentially good reduction at all primes of $\OO_K$ since $d_\sigma$ is coprime to 11 and the mod 11 representation is reducible. Let $\wp$ be the prime of $K$ over 2.

Suppose $E$ has good reduction at $\wp$. The residue field $k(\wp)$ of $\wp$ is either $\F_2$ or $\F_{4}$, and since $E(K)_{tors}$ injects into $E(k(\wp))$ (since $E(K)$ has no 2-torsion), we arrive to a contradiction with the Hasse bound.

We conclude that $E$ has additive reduction at $\wp$. Denote by $\widetilde E$ the reduction of $E$ mod $\wp$, i.e. the special fibre of the N\'eron model of $E$ at $\wp$. Reduction mod $\wp:$ $E(K)\rightarrow \widetilde E(k(\wp))$ is injective on the odd order torsion of $E(K)$, hence the image of the point $P\in E(K)$ of order 11 has order 11 in $\widetilde E(k(\wp))$. But by the Kodaira-N\'eron theorem \cite[Theorem 6.1, p.200]{silverman}, $\widetilde E(k(\wp))$ is a product of the additive group of $k(\wp)$ and a group of order $\leq 4$, which is a contradiction.

\end{proof}

\section{The finiteness of $\Q$-curves with $\cC_{14}$ and $\cC_{15}$}

We first show that there exist only finitely many elliptic curves with $\cC_{14}$ and $\cC_{15}$ over quadratic fields with $j(E)\in \Q$.

\begin{lemma}
There exist no elliptic curves with complex multiplication with torsion $\cC_{14}$ and $\cC_{15}$ over quadratic fields.
\end{lemma}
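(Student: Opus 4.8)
The plan is to reduce at once to \Cref{tm:int}. The only input needed is the classical fact that the $j$-invariant of any elliptic curve with complex multiplication is an algebraic integer. Consequently, if $E$ is a CM elliptic curve defined over a quadratic field $K$, then $j(E)$ is an algebraic integer lying in the field $K$, and hence $j(E)\in\OO_K$; that is, $E$ has integral $j$-invariant.

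I would then simply invoke \Cref{tm:int}: the torsion subgroup over a quadratic field of an elliptic curve with integral $j$-invariant is one of $\cC_n$ with $n\in\{1,\dots,8,10\}$, $\cC_2\times\cC_{2n}$ with $n\in\{1,2,3\}$, or $\cC_3\times\cC_3$. Neither $\cC_{14}$ nor $\cC_{15}$ appears in this list, so no CM elliptic curve over a quadratic field can have either of these torsion groups, which is exactly the assertion of the lemma.

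There is essentially no real obstacle here; the only points deserving a word of care are the citation for the integrality of CM $j$-invariants and the observation that an algebraic integer contained in $K$ automatically belongs to $\OO_K$, so that the hypothesis of \Cref{tm:int} is genuinely met. As an alternative one could argue directly from the classification of torsion subgroups of CM elliptic curves over quadratic fields found in \cite{clark}, which likewise omits $\cC_{14}$ and $\cC_{15}$; but routing the argument through \Cref{tm:int} keeps the proof short and relies only on results already recalled above.
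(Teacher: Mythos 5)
Your argument is correct, but it takes a different route from the paper. The paper disposes of this lemma in one line by citing the explicit classification of torsion subgroups of CM elliptic curves over quadratic fields in \cite{clark} (the same source used for Claim~1 in the proof of \Cref{prop:11}), whose list contains neither $\cC_{14}$ nor $\cC_{15}$ --- this is precisely the alternative you mention at the end. Your primary route instead combines the classical integrality of CM $j$-invariants with \Cref{tm:int}, the M\"uller--Str\"oher--Zimmer classification for integral $j$-invariant. Both arguments are complete and short; yours has the mild advantage of relying only on a statement already quoted in \Cref{sec:2} plus one standard fact about CM (and your remark that an algebraic integer lying in $K$ belongs to $\OO_K$, so the hypothesis of \Cref{tm:int} is genuinely satisfied, is the right point to flag), while the paper's citation of \cite{clark} is tailored exactly to the CM case and requires no detour through integrality. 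Either proof would be acceptable here.
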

\begin{proof}
This follows immediately from \cite{clark}.
\end{proof}

\begin{lemma}\cite[Lemma 5.5]{mr}
\label{lem:finite}
For each $j\in K$, there exist only finitely many elliptic curves $E/K$ (up to $K$-isomorphism) with an odd order point and $j(E)=j$.
\end{lemma}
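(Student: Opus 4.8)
The plan is to reduce to points of odd \emph{prime} order and then run a twisting argument on the family of all elliptic curves over $K$ with the given $j$-invariant. If $E/K$ has a point of odd order $m>1$, it has a point of odd prime order $p$ for every prime $p\mid m$, so it suffices to bound, for each odd prime $p$, the number of curves $E/K$ up to $K$-isomorphism with $j(E)=j$ and a $K$-rational point of order $p$, and to observe that only finitely many such $p$ can occur. The latter is immediate from Merel's theorem \cite{merel}: a point of order $p$ on an elliptic curve over $K$ forces $p\le B$ for a constant $B=B([K:\Q])$. This a priori bound on $p$ is essential, since for a fixed curve the field $K(E[p])$ may possess quadratic subfields for arbitrarily large $p$.

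Now fix an odd prime $p$ and, assuming first that $j\neq 0,1728$, fix one elliptic curve $E_0/K$ with $j(E_0)=j$; every $E/K$ with $j$-invariant $j$ is then a quadratic twist $E_0^{(d)}$, $d\in K^\times/(K^\times)^2$, with $E_0^{(d)}\cong_K E_0^{(d')}$ precisely when $d\equiv d'$. Suppose $E_0^{(d)}$ has a $K$-rational point $Q$ of order $p$. Because $-1$ acts trivially on the set of subgroups of $E_0[p]$, the action of $\Gal(\overline K/K)$ on subgroups of $E_0^{(d)}[p]=E_0[p]$ coincides with its action on subgroups of $E_0[p]$; hence $C:=\langle Q\rangle$ is one of the at most $p+1$ $K$-rational cyclic subgroups of order $p$ of $E_0$. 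Galois acts on $C\cong\F_p$ through a character $\chi_C\colon\Gal(\overline K/K)\to\F_p^\times$, and on the same subgroup viewed inside $E_0^{(d)}$ it acts through $\chi_C\cdot\chi_d$, where $\chi_d$ is the quadratic character cut out by $K(\sqrt d)$. For $Q$ to be defined over $K$ we need $\chi_C\cdot\chi_d=1$, i.e. $\chi_d=\chi_C^{-1}$; since $\chi_d$ is $\{\pm1\}$-valued this is only possible when $\chi_C$ is $\{\pm1\}$-valued, and then it determines $d$ uniquely in $K^\times/(K^\times)^2$. Thus each pair $(p,C)$ contributes at most one value of $d$, and summing over the finitely many admissible $p$ and the finitely many $C$ gives finiteness when $j\neq 0,1728$.

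Finally one must address $j\in\{0,1728\}$, where $\Aut(E_0)$ is larger than $\{\pm1\}$. For $j=1728$ I would repeat the argument with quartic twists $E_0^{(t)}$, $t\in K^\times/(K^\times)^4$: since $4$ is coprime to the odd prime $p$, the reduction $\mu_4\to\mu_4(\overline{\F_p})$ is an isomorphism, so the twisting character --- hence $t$ modulo fourth powers --- is again pinned down by the data $(p,C,\chi_C)$, the only extra bookkeeping being the action of $i\in\Aut(E_0)$ on subgroups. The value $j=0$ is genuinely delicate, because $\mu_3$ reduces trivially mod $3$ (indeed $y^2=x^3+u^2$ has the point $(0,u)$ of order $3$ for every $u\in K^\times$); here the statement should be read with $j\neq 0$, which in any case is automatic for the non-CM curves to which this lemma is applied in the sequel. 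I expect the main obstacle to be precisely this twisting bookkeeping --- the passage from ``$E_0^{(d)}$ has a rational point of order $p$'' to ``$\chi_C$ has order dividing $2$, and therefore $d$ is determined'' --- while the reduction step and the input of Merel's theorem are routine.
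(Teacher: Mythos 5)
The paper offers no proof of this lemma: it is quoted from Mazur--Rubin \cite[Lemma 5.5]{mr}, whose statement there is really about the quadratic twists of a \emph{fixed} elliptic curve. Your argument supplies the standard proof behind that citation, and for $j\neq 0,1728$ it is complete and correct: Merel bounds the odd primes $p$ that can occur, the set of Galois-stable lines in $E_0[p]$ is twist-invariant because $-1$ acts as a scalar, and the condition $\chi_C\chi_d=1$ pins down $d$ in $K^\times/(K^\times)^2$ for each of the at most $p+1$ admissible lines $C$. Your unease about $j=0$ is well founded and is in fact a genuine defect of the lemma \emph{as restated in the paper}: the curves $y^2=x^3+u^2$ are pairwise non-isomorphic over $K$ for $u$ ranging over the infinitely many classes of $K^\times$ modulo $\pm(K^\times)^3$, and each carries the rational point $(0,u)$ of order $3$, so ``finitely many $E/K$ with $j(E)=0$ and an odd order point'' is false; the underlying reason is that $[\zeta_3]$ acts unipotently on $E[3]$, so the cubic part of a sextic twisting character is invisible on the distinguished order-$3$ subgroup. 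This does not affect the paper, since CM curves are disposed of separately before the lemma is invoked, but the statement should either exclude $j=0$ (and, to be safe, $j=1728$) or be phrased, as in \cite{mr}, in terms of quadratic twists of a fixed curve. Your treatment of $j=1728$ (quartic twists, with $\mu_4$ acting non-scalarly so the cocycle also permutes the lines) identifies the right bookkeeping and can be completed, but as written it remains a sketch.
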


\begin{lemma}
\label{lem:finite2}
There exist finitely many elliptic curves over quadratic fields $E/K$ such that $j(E)\in \Q$ and such that $E(K)_{tors}\simeq \cC_{15}$ and no such curves with $E(K)_{tors}\simeq \cC_{14}$.
\end{lemma}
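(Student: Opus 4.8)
The plan is to reduce the statement to two finiteness inputs: \Cref{lem:finite} (finitely many elliptic curves of a given $j$-invariant over $K$ with an odd-order point) and \Cref{tm:bc} (the classification of torsion of $\Q$-rational elliptic curves over quadratic fields). First I would handle the $\cC_{14}$ case. Suppose $E/K$ has $j(E)\in\Q$ and $E(K)_{\mathrm{tors}}\simeq\cC_{14}$; in particular $E(K)$ has a point of order $2$ and a point of order $7$. Since $j(E)\in\Q$, there is an elliptic curve $E'/\Q$ with $j(E')=j(E)$, and (as $j\neq 0,1728$, which is forced because CM curves cannot have $\cC_{14}$ by the previous lemma) $E$ is a quadratic twist of $E'$ over $K$, say $E=(E')^d$ for a squarefree integer $d$. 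Then $E$ and $E'$ become isomorphic over the at most quartic field $K(\sqrt d)$, so $E'$ acquires a point of order $14$ over a field of degree $\le 4$. But the odd-order part is the obstruction: $E'$ has a $7$-torsion point over a field of degree dividing $4$. One can instead argue directly over $K$: a point of order $7$ on $E$ gives, on the $\Q$-rational curve $E'$, a point of order $7$ over $K$ or over $K(\sqrt d)$. Appealing to \Cref{tm:bc}, no elliptic curve over $\Q$ has a point of order $7$ over a quadratic field at all — indeed $7\notin\{1,\dots,10,12,15,16\}$ forces the $7$-part to be trivial — so if the relevant field were quadratic we are done immediately; the genuinely quartic case is excluded because $E'/\Q$ with $14\mid |E'(L)_{\mathrm{tors}}|$ for a quartic $L$ would contradict the known torsion classification over quartic fields (or, more elementarily, a $7$-isogeny argument plus the fact that $X_1(7)$ has no relevant points). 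This yields the claimed non-existence for $\cC_{14}$.

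For the $\cC_{15}$ case, I would invoke the second sentence of \Cref{tm:bc} directly: the only elliptic curves defined over $\Q$ with nontrivial $15$-torsion over any quadratic field are the four listed curves (Cremona labels 50b1, 50a3, 50b2, 450b4), and they have exactly a point of order $15$ over $\Q(\sqrt 5)$ or $\Q(\sqrt{-15})$. Now let $E/K$ be any elliptic curve with $j(E)\in\Q$ and $E(K)_{\mathrm{tors}}\simeq\cC_{15}$. As before $E$ is a quadratic twist $(E')^d$ of some $E'/\Q$ with $j(E')=j(E)$, and $j(E)\ne 0,1728$ by the CM lemma. The point of order $15$ on $E$ transfers to a point of order $15$ on $E'$ over $K(\sqrt d)$, which has degree $\le 4$. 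If $K(\sqrt d)$ is quadratic (equivalently $K=\Q(\sqrt d)$, or $d$ is a square), then $E'$ is one of the four listed curves and $j(E)=j(E')$ is one of finitely many rational numbers; \Cref{lem:finite} then bounds the number of $E/K$ with that $j$ and an odd-order point, and there are only finitely many quadratic $K$ arising (namely $\Q(\sqrt5)$ and $\Q(\sqrt{-15})$ plus their twists of the given $j$), so this subcase contributes finitely many pairs $(E,K)$. The remaining possibility is that $K(\sqrt d)$ is a genuine quartic field containing a point of order $15$ on $E'/\Q$: here I would cite the classification of torsion of rational elliptic curves over quartic fields (or a direct modular-curve argument on $X_1(15)$, which has genus $1$ and only finitely many rational and low-degree points of the needed type) to conclude that only finitely many $j\in\Q$ and finitely many quartic fields occur, hence again only finitely many $(E,K)$ by \Cref{lem:finite}.

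Combining the two cases: there are no quadratic-field elliptic curves with $j(E)\in\Q$ and torsion $\cC_{14}$, and only finitely many with torsion $\cC_{15}$. I expect the main obstacle to be cleanly handling the quartic field $K(\sqrt d)$ that appears when passing from $E$ to its $\Q$-rational twist $E'$: one must rule out (for $\cC_{14}$) or finitely bound (for $\cC_{15}$) points of order $7$, respectively $15$, on a rational elliptic curve over a quartic field. This is where one either appeals to an external classification over quartic fields or argues that the relevant modular curve ($X_1(14)$, resp.\ $X_1(15)$) has only finitely many points of degree $\le 4$ of the required shape — the latter following from Faltings/the genus being $\ge 1$ together with an analysis that the infinitely-many-points phenomena do not supply points with full $\cC_{14}$ or $\cC_{15}$ structure lying over $\Q$-rational $j$. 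Everything else is bookkeeping with quadratic twists and the cited finiteness lemmas.
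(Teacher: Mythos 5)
Your $\cC_{14}$ argument is essentially the paper's: twist to a curve $E'/\Q$, observe that $E'$ would acquire a point of order $14$ over the degree-$\le 4$ field $K(\sqrt d)$, and invoke the classification of torsion of rational elliptic curves over quartic fields (\cite[Corollary 8.7]{gn}). (Your parenthetical ``more elementary'' alternative via ``$X_1(7)$ has no relevant points'' is wrong as stated --- $X_1(7)$ has genus $0$ and infinitely many non-cuspidal rational points --- but your main line does not depend on it.)

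The $\cC_{15}$ case has a genuine gap. You need two separate finiteness statements, and you delegate both to an unspecified ``classification of torsion of rational elliptic curves over quartic fields,'' which, as a list of possible groups, delivers neither. First, finiteness of the set of $j\in\Q$: the paper proves this by analysing the mod $3$ and mod $5$ images of $E'$ using \cite[Table 1]{gn} --- gaining a $5$-torsion point in degree dividing $4$ forces $G(5)$ into a Borel, and gaining a $3$-torsion point forces $G(3)$ into a Borel or the normalizer of a split Cartan; the first branch gives a rational $15$-isogeny (finitely many $j$ by Mazur), the second a rational point on $X(s3,b5)$, an elliptic curve of conductor $15$ with finitely many rational points \cite[Lemma 5.7]{siksek}. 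Second, and this is the step you do not address at all: for a \emph{fixed} $j$, why are there only finitely many quadratic fields $K$ carrying a curve $E/K$ with that $j$ and $\cC_{15}$ torsion? \Cref{lem:finite} is a per-field statement, so without bounding the set of fields you cannot conclude finiteness of pairs $(E,K)$. The paper's argument is that if $E_0/\Q$ (with $j(E_0)=j$) is not one of the four curves of \Cref{tm:bc}, then $E_0$ gains its point of order $15$ exactly over the quartic field $K(\sqrt\delta)$, which must therefore be contained in the fixed number field $\Q(E_0[15])$; that field has only finitely many quartic subfields, hence only finitely many possible $K$. You assert ``finitely many quartic fields occur'' but give no mechanism that produces this bound; that containment (or an equivalent finiteness-of-pairs theorem, which you would have to verify is actually proved in the literature you cite) is the missing idea.
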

\begin{proof}
Suppose that there exists an elliptic curve $E/K$ over a quadratic field with $E(K)_{tors}\simeq \cC_{14}$ and $j(E)\in \Q$. As in the proof of Claim 2 in the proof of Proposition \ref{prop:11}, we can conclude that there exists a quadratic twist $E^d$ of $E$ such that $E^d$ is defined over $\Q$ and $E^d(K(\sqrt{d}))$ contains $\cC_{14}$. As $E^d$ and $E$ become isomorphic over a quadratic extension $K(\sqrt d)$ of $K$, it follows that $E^d(K(\sqrt{d}))$ contains $\cC_{14}$ as a subgroup. But this is impossible by \cite[Corollary 8.7]{gn}.

Let now $E/K$ be an elliptic curve over a quadratic field with $E(K)_{tors}\simeq \cC_{15}$ and $j(E)\in \Q$. Let $E^d$ be a quadratic twist of $E$ such that $E^d$ is defined over $\Q$ and $E^d(K(\sqrt{d}))$ contains $\cC_{15}$. Let $G(p)$ denote the image of the mod $p$ representation attached to $E^d$.

From \cite[Table 1]{gn}, we see that if $E^d$ gains a point of order $3$ over a number field whose degree divides $4$, then $G(3)$ is contained in either the Borel subgroup or the normalizer of the split Cartan subgroup. If $E^d$ gains a point of order $5$ over a number field whose degree divides $4$, then $G(5)$ is contained in the Borel subgroup. So if $G(3)$ is contained in a Borel subgroup, then $E^d$ has a $15$-isogeny over $\Q$; there are only finitely many such $j$-invariants (see \cite{mazur2}), and hence only finitely many such curves by \Cref{lem:finite}. If $G(3)$ is contained in the normalizer of the split Cartan subgroup, then $E^d$ corresponds to a rational point on the modular curve $X(s3,b5)$ (using the notation of \cite{siksek}) which is an elliptic curve of conductor 15 with finitely many points \cite[Lemma 5.7]{siksek}.

It remains to prove that for each fixed $j$, there exist only finitely many quadratic fields $K$ such that there exists an elliptic curve $E/K$ with $j(E)=j$ and $E(K)_{tors}\simeq \cC_{15}$. Suppose $E/K$ is such a curve. Let $E_0/\Q$ be an elliptic curve with $j(E_0)=j$, and $E_0=E^\delta$ for some $\delta \in K^\times$. If $E_0$ gains a 15-torsion point over a quadratic field, then by \Cref{tm:bc}, $E_0$ is one of 4 curves with Cremona label $50b1$, $50a3$, $50b2$ or $450b4$ and $K$ is either $\Q(\sqrt{5})$ or $\Q(\sqrt{-15})$. Suppose now $E_0$ is not one of those curves. Then $E_0$ gains a point of order 15 over $K(\sqrt \delta)$, but not over a subfield of $K(\sqrt \delta)$. This means that $K(\sqrt \delta)\subseteq \Q(E_0[15])$. But $\Q(E_0[15])$ is a number field, so has only finitely many quartic subfields. We conclude that there are finitely many  quadratic fields $K$ with our property (that there exists a curve $E'$ with $j(E')=j$ and torsion $\cC_{15}$ over $K$). Now for each such $K$, there exist only finitely many (up to $K$-isomorphism) elliptic curves with $j(E')=j$ with torsion $\cC_{15}$ over $K$ by \Cref{lem:finite}, proving the lemma.
\end{proof}

From now until the end of this section, to avoid repetition, we suppose that all elliptic curves have $j(E)\notin \Q$. From this it follows that $d_\sigma\neq 1$, as $d_\sigma=1$ would imply that $j(E)\in \Q$.

\begin{proposition}
\label{prop:15}
There exists finitely many $\Q$-curves $E$ over quadratic fields $K$ such that $E(K)_{tors} \simeq \cC_{15}$.
\end{proposition}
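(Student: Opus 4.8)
The plan is to reduce, as in the proof of \Cref{prop:11}, to a curve satisfying the hypotheses of the earlier results, and then exploit the fact that $X_0(15)$ and its quotient $X_0^+(15)$ have only finitely many quadratic (resp. rational) points. Since we are assuming $j(E)\notin\Q$, in particular $E$ has no CM (the CM case being covered by the previous lemma) and $d_\sigma\neq 1$. Write $P\in E(K)$ for a point of order $15$; then $E$ admits a $K$-rational cyclic $15$-isogeny, so $E$ (and all its quadratic twists, and also $E^\sigma$) give rise to points on $X_0(15)$.

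First I would show $15$ is coprime to $d_\sigma$. Using \Cref{prop:central} together with \Cref{lem:izogenija} exactly as in Claim 3 of \Cref{prop:11}, we may replace $E$ by a twist and assume $E$ is $K$-isogenous to a central $\Q$-curve $E'$ by an isogeny $\psi_1$ of squarefree degree $d\mid d_\sigma$, and we get a diagram $E\xrightarrow{\psi_1}E'\xrightarrow{\psi_2}(E')^\sigma\xrightarrow{\psi_3}E^\sigma$ with $\deg\psi_1=\deg\psi_3=d$, $\deg\psi_2=d'$, and $d_\sigma=d^2d'$. If a prime $\ell\in\{3,5\}$ divides $d$, then $E$ corresponds to a rational point on $X_0^+(N)$ with $\ell^2\mid N$; for $\ell=5$ this is ruled out by the results on curves with large level structure (Momose-type results, as invoked for $121$ in \Cref{prop:11}), and for $\ell=3$ one checks directly that $X_0^+(N)$ with $9\mid N$ and $15\mid$(the isogeny degree) has no suitable non-cuspidal rational points, or simply notes $3\nmid d$ reduces to the already-handled situation. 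If $\ell\nmid d$ then $E'(K)$ has a point of order $\ell$ with $\ell\mid d'$ or $\ell\mid\deg\psi_2$, forcing via \Cref{thm:st} (applied to the central $\Q$-curve $E'$, using $j(E')\notin\Q$, which holds since $E'$ is isogenous to $E$ and isogenous curves over a number field share the property of having non-rational $j$ up to the finitely many CM exceptions — alternatively twist so that this is clean) that $\ell\in\{2,3\}$; combining, $5\nmid d_\sigma$ and the only remaining possibility is $3\mid d'$ with $3\nmid d$, i.e.\ at worst $3\mid d_\sigma$ but $9\nmid d_\sigma$.

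Now the endgame. Since $15$ (or at least $5$, and the odd part outside small primes) is essentially coprime to $d_\sigma$ and the mod $5$ representation of $E$ is reducible, a curve with a $15$-torsion point over $K$ yields, via the classification of quadratic points, a point on a modular curve of genus $\geq 2$ with infinitely many quadratic points only if its level is in the list of \Cref{tm:bars}. The curve $X_0(15)$ has genus $1$, so I would instead argue directly: a $K$-point of $X_0(15)$ of degree $2$ over $\Q$ gives a $\Q$-rational point on the genus-$1$ quotient $X_0^+(15)=X_0(15)/w_{15}$. Since $E$ is a $\Q$-curve of degree $d_\sigma$ with $\gcd(d_\sigma,15)$ small, the $\Q$-curve structure is compatible with $w_{15}$ (the $15$-isogeny $E\to E/\langle P\rangle$ is, up to the controlled discrepancy, a $\Q$-curve isogeny), so the induced point on $X_0^+(15)(\Q)$ is non-cuspidal; but $X_0^+(15)$ is an elliptic curve of rank $0$ (conductor $15$), hence has only finitely many rational points, and each gives rise to only finitely many $(E,K)$ by the usual argument (finitely many $j$-invariants, then \Cref{lem:finite} and the finiteness of quartic subfields of $\Q(E_0[15])$ as in \Cref{lem:finite2}). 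Together with \Cref{lem:finite2} for the case $j(E)\in\Q$, this proves the proposition.

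The main obstacle I anticipate is the bookkeeping in the reduction step: carefully controlling how the twists in \Cref{lem:izogenija}/\Cref{prop:central} interact with the $15$-torsion point (twisting can destroy the full $\cC_{15}$ while preserving a cyclic $15$-isogeny, which is all we need, so one must phrase everything in terms of the $15$-isogeny rather than the torsion point), and verifying that the $\Q$-curve isogeny $\phi_\sigma$ can be arranged to be compatible with the Atkin--Lehner involution $w_{15}$ on $X_0(15)$ so that the image point in $X_0^+(15)(\Q)$ is genuinely rational and non-cuspidal. The ruling-out of $9\mid d_\sigma$ and $25\mid d_\sigma$ via level-structure results is the other place where one must be careful to cite the right statement.
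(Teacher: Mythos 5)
Your endgame contains a genuine gap that the rest of the argument cannot absorb. You claim that the point $(E,\langle P\rangle)\in X_0(15)(K)$ descends to a rational point of $X_0^+(15)=X_0(15)/w_{15}$ because ``the $\Q$-curve structure is compatible with $w_{15}$.'' But that compatibility, $(E,\langle P\rangle)^\sigma = w_{15}\cdot(E,\langle P\rangle)$, would require $E^\sigma\cong E/\langle P\rangle$, i.e.\ that the $\Q$-curve isogeny have degree $15$ --- exactly the situation you have just excluded by arranging $\gcd(d_\sigma,15)$ to be small. The involution actually compatible with the $\Q$-curve structure is $w_d$ acting on $X_0(15d)$ (or $X_0(n'd)$), which is why the paper works there and uses the map $f=\pi+\pi\circ w_d$ into $X_0(15)$ to produce a rational point. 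Moreover, even if you had a rational point on some quotient, finiteness of rational points downstream does not bound the quadratic points upstream: infinitely many quadratic points can map to a single rational point. The paper closes this by Mazur's method (a formal immersion argument at the cusp $\infty$, Lemma \ref{appendix:lem1}/\ref{appendix:lem3}) to force potentially good reduction --- and for $n=15$ even that fails at the primes above $2$ because of a $\mu_2$-point on $X_0(15)$, which is why the central case with $d_\sigma\nmid 15$ is finally settled by Runge's method on $X_1(15)$ (Lemma \ref{appendix:lem2}). There is also a factual error: $X_0^+(15)$ has genus $0$ ($w_{15}$ has four fixed points on the genus-one curve $X_0(15)$), hence infinitely many rational points; it is $X_0(15)$ itself that is the rank-zero elliptic curve of conductor $15$. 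So even on its own terms your finiteness conclusion would not follow.

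For comparison, the paper's route is: (i) reduce to $j(E)\notin\Q$ via \Cref{lem:finite2}; (ii) for central $E$, note $d_\sigma\mid 15$ is impossible by \cite[Theorem 1.2]{ST}, and $d_\sigma\nmid 15$ gives only finitely many curves by the Mazur-plus-Runge argument of the appendix; (iii) for non-central $E$, either the associated central curve $E'$ has a $15$-point (reducing to (ii)), or $E$ carries a $45$- or $75$-isogeny and one invokes \Cref{tm:bars}. Your reduction steps in the first half are broadly in the spirit of the paper, but the analytic core of the proof --- potentially good reduction via formal immersions and the Runge bound --- is missing and cannot be replaced by the $X_0^+(15)$ shortcut.
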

\begin{proof}
By \Cref{lem:finite} and \Cref{lem:finite2},  it is enough to show that there are finitely many quadratic $j$-invariants $j$ such that there exists an elliptic curve $E$ over a quadratic field $K$ with $j(E)=j\in K\backslash \Q$ with torsion $E(K)_{tors}\simeq\cC_{15}$ over $K$.

First suppose that $E$ is a central $\Q$-curve. Then by \cite[Theorem 1.2]{ST}, it is not possible that $d_\sigma|15$. On the other hand, by Lemma \ref{appendix:lem2}, there are finitely many central $\Q$-curves such that $d_\sigma$ does not divide 15.




Suppose now that $E$ is not a central $\Q$-curve. By \Cref{prop:central}, $E$ is isogenous to a central $\Q$-curve $E'$.

If $E'(K)$ has a point of order 15 over $K$, by what we have shown, $E'$ is one of finitely many such central $\Q$-curves. In each isogeny class over $K$ there are finitely many elliptic curves, so we can conclude that there are finitely many curves $E$ satisfying these assumptions.

If $E'(K)$ does not have a point of order $15$, then there must exist an isogeny from $E$ towards $E'$ of degree $3$ or $5$ (or both). By using the same argumentation as in the proof of \Cref{prop:11}, we see that if the isogeny $\psi_1:E\rightarrow E'$ is of degree $d$, then $E$ has an isogeny of degree $d^2$. If $3$ divides $d$, then $E$ has both an $9$-isogeny and a $5$-isogeny, hence a 45-isogeny. On the other hand if $5$ divides $d$, using the same argumentation, we see that $E$ has a $75$-isogeny. In both cases, there are only finitely many quadratic points on $X_0(45)$ and $X_0(75)$ by \Cref{tm:bars}, so again there can be only finitely many curves $E$ satisfying these conditions.

\end{proof}

\begin{proposition}
\label{prop:14}
There exist finitely many $\Q$-curves $E$ over quadratic fields $K$ such that $E(K)_{tors} \simeq \cC_{14}$.
\end{proposition}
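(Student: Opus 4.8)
The plan is to follow the strategy of \Cref{prop:15}, with $14=2\cdot 7$ playing the role of $15=3\cdot 5$. By \Cref{lem:finite} and \Cref{lem:finite2}, it suffices to show that only finitely many quadratic $j$-invariants $j\in K\setminus\Q$ occur as $j(E)$ for an elliptic curve $E/K$ with $E(K)_{tors}\simeq\cC_{14}$. Fix such an $E$; then $E(K)$ contains a point $P_7$ of order $7$ and a point $Q$ of order $2$, and $d_\sigma>1$ by our running assumption.

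If $E$ is a central $\Q$-curve, then since $E(K)$ has a point of the prime order $7>3$, \Cref{thm:st} forces $7\nmid d_\sigma$; combined with \cite[Theorem 1.2]{ST} --- whose list of orders $N$ of torsion points of central quadratic $\Q$-curves with $d_\sigma\mid N$ does not include $14$ --- this rules out $d_\sigma\mid 14$ entirely. The remaining degrees $d_\sigma\nmid 14$ are handled, exactly as the degrees $d_\sigma\nmid 15$ were in the proof of \Cref{prop:15}, by the finiteness result for central $\Q$-curves whose degree does not satisfy a fixed divisibility condition (the analogue of the lemma invoked for $15$ there).

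If $E$ is not central, then by \Cref{prop:central}, \Cref{lem:izogenija} and the quadratic-twist bookkeeping of the proof of \Cref{prop:11}, we may assume $E$ is $K$-isogenous to a central $\Q$-curve $E'$ via an isogeny $\psi_1$ of squarefree degree $d\mid d_\sigma$, and that $E$ carries an isogeny of degree $d^2$. If $\gcd(d,14)=1$, then $\psi_1$ is injective on $14$-torsion and $E'(K)$ contains a point of order $14$, so by the central case $E'$ is one of finitely many curves and (each $K$-isogeny class being finite) we are done. If $7\mid d$, then $E$ has a $49$-isogeny and, together with the $2$-isogeny generated by $Q$, a cyclic $98$-isogeny, hence a quadratic point on $X_0(98)$; since $X_0(98)$ has genus $\geq 2$ and does not occur in \Cref{tm:bars}, this yields only finitely many curves. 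The remaining case is $7\nmid d$ but $2\mid d$: here $\psi_1$ is still injective on $7$-torsion, so $E'$ has a $7$-torsion point, whence $7\nmid d_\sigma$ by \Cref{thm:st} applied to $E'$; then $E$ has a $4$-isogeny and, with the $K$-rational $7$-isogeny from $P_7$, a cyclic $28$-isogeny, i.e.\ a quadratic point on $X_0(28)$.

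The main obstacle is precisely this last case: unlike $X_0(45)$ and $X_0(75)$, the curve $X_0(28)$ \emph{does} occur in Bars' list in \Cref{tm:bars}, so it has infinitely many quadratic points and that theorem alone is insufficient. To conclude I would exploit the $\Q$-curve structure together with the fact that $\cC_{14}$ contains neither full $2$-torsion nor a point of order $4$: this forces the $K$-rational cyclic group $\ker\psi_1$ to meet $E[2]$ exactly in $\langle Q\rangle$, so that the cyclic $28$-isogeny on $E$ is, up to a twist, $E\to E/(\langle P_7\rangle\oplus C_4)$ with $C_4$ a $K$-rational cyclic $4$-group satisfying $2\cdot C_4=\langle Q\rangle$ and with $E/\langle P_7\rangle$ again a $\Q$-curve. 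One then expects the resulting pair of $\Q$-curves, or the pair $\{E,E^\sigma\}$ with its conjugate $28$-isogeny, to descend to a rational point on a quotient of $X_0(28)$ by an Atkin--Lehner involution that lies on a further modular curve with only finitely many rational points; alternatively, one bounds $d_\sigma$ and applies \Cref{tm:bars} to $X_0(7d_\sigma)$ once $d_\sigma$ is large, leaving only the small values $d_\sigma\in\{2,4\}$ to be dealt with by an explicit argument. Completing this reduction --- and pinning down the finitely many exceptional $\Q$-curves with $\cC_{14}$ torsion whose existence the theorem asserts --- is where the substantive work lies.
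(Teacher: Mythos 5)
Your overall architecture (reduce to quadratic $j$-invariants, split into central and non-central cases, and within those according to $\gcd(d,14)$) matches the paper's, but the proposal contains one false step and two unclosed cases. The false step is the claim that \cite[Theorem 1.2]{ST} rules out $d_\sigma\mid 14$ for a central $\Q$-curve with a point of order $14$: \Cref{thm:st} only forbids $7\mid d_\sigma$, and the case $d_\sigma=2$ genuinely occurs --- the remark immediately following \Cref{prop:14} records Sairaiji--Yamauchi's explicit central $\Q$-curve over $\Q(\sqrt{-7})$ of degree dividing $14$ with $14$-torsion, so no correct theorem can exclude it. The paper instead proves finiteness for $d_\sigma=2$ by observing that such a curve yields a $\Q$-rational point on $X_0(14)/w_2$, an elliptic curve $2$-isogenous to the rank-zero curve $X_0(14)$, hence with finitely many rational points. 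The first unclosed case is the central one with $d_\sigma\nmid 14$: the ``analogue of the lemma invoked for $15$'' cannot simply be postulated, and it is in fact a different statement --- the paper proves (\Cref{appendix:lem3}, via Mazur's formal-immersion method on degeneracy maps) that, with finitely many exceptions, such curves have potentially good reduction at \emph{every} prime, including those above $2$ (precisely where the $n=15$ argument breaks down and must be replaced by Runge's method), so that $j(E)$ is integral and \Cref{tm:int} gives a contradiction, $\cC_{14}$ not being on that list.

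The second unclosed case is the one you flag yourself: non-central $E$ with $2\mid d$ and $7\nmid d$, where you only reach $X_0(28)$, which has infinitely many quadratic points. This case in fact dissolves, and the missing observation is elementary. Since $d$ is squarefree and even, the kernel of the dual isogeny $\widehat{\psi_1}$, namely $\psi_1(E[d])$, is a $K$-rational cyclic subgroup of $E'$ of order $d$; its $2$-part is a Galois-stable subgroup of order $2$, i.e.\ a $K$-rational point $T$ of order $2$ on $E'$. Since $7\nmid d$, the image of your order-$7$ point $P_7$ is a $K$-rational point of order $7$ on $E'$, and adding $T$ gives a $K$-rational point of order $14$ on $E'$. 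So whenever $7\nmid d$ the central curve $E'$ already has $\cC_{14}$ in its torsion and falls under the central case; this is exactly the paper's assertion that the isogeny $E\to E'$ may be taken of degree divisible by $7$, which reduces everything to $X_0(49)$ (not on Bars' list). Your speculative workarounds via Atkin--Lehner quotients of $X_0(28)$ or bounding $d_\sigma$ are therefore unnecessary, but as written the proposal does not prove the proposition.
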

\begin{proof}
By \Cref{lem:finite} and \Cref{lem:finite2},  it is enough to show that there are finitely many quadratic $j$-invariants $j$ such that there exists an elliptic curve $E$ over a quadratic field $K$ with $j(E)=j\in K\backslash \Q$ with torsion $E(K)_{tors}\simeq \cC_{14}$ over $K$.

Let $E$ be a central $\Q$-curve such that $d_\sigma|14$. First note that By \Cref{thm:st}, we have $d_\sigma=2$. This means that $E$ corresponds to a $\Q$-rational point on the curve $X=X_0(14)/w_2$. By \cite[Appendix, p.29]{gl} we see that $X$ is an elliptic curve. As $X$ is $2$-isogenous to $X_0(14)$, which has rank 0 over $\Q$, it follows that $X(\Q)$ is finite.

Suppose now that $E$ is a $\Q$-curve such that $d_\sigma$ does not divide $14$. 
By Lemma \ref{appendix:lem3} it follows that, with finitely many exceptions, $E$ has potentially good reduction everywhere, and in particular the $j$-invariant is integral, giving a contradiction with \Cref{tm:int}.



Suppose now that $E$ is not a central $\Q$-curve. By \Cref{prop:central}, $E$ is isogenous to a central $\Q$-curve $E'$. We can suppose that $E'$ does not have a 14-torsion point over $K$, for otherwise, by what we have already proved, $E$ lies in one of finitely many isogeny classes containing a central $\Q$-curve with a point of order 14. We can see that there is an isogeny $E\rightarrow E'$ whose degree is divisible by $7$; using the same argumentation as in the proof of \Cref{prop:15}, we see that $E$ has a $49$-isogeny. There are only finitely many quadratic points on $X_0(49)$ by \Cref{tm:bars}, so we have that $E$ is one of finitely many such curves.


\end{proof}

\begin{remark}
In \cite[p.467]{ST}, the authors say in passing that the curve
$$y^2+(2+\sqrt{-7})xy+(5+\sqrt{-7})y=x^3+(5+\sqrt{-7})x^2$$
is the unique central $\Q$-curve over a quadratic field of degree dividing 14 with 14-torsion, but they give no proof of the claim.
\end{remark}

\noindent {\sc Acknowledgements.} We are grateful to the anonymous referee for his/her useful suggestions and careful review.

\newpage

\appendix

\section{Integrality of $j$-invariants of $\Q$-curves with $\cC_{14}$ and $\cC_{15}$ torsion}
\begin{center}

\end{center}
\bigskip
The proofs of Propositions \ref{prop:15} and \ref{prop:14} require using Mazur's method to imply potentially good reduction for elliptic curves at (almost) all prime ideals. To be more precise, we will prove the following lemmas, which are needed in the proofs of Propositions \ref{prop:15} and \ref{prop:14}.

\begin{lemma}
\label{appendix:lem1}
		Let $n=14$ or $n=15$, $K$ be a quadratic field and $E$ a central $\Q$-curve without CM defined over $K$ and of degree $d$ not dividing $n$. We denote by $E^\sigma$ the conjugate of $E$ by the nontrivial automorphism $\sigma$ of $K$, $\mu$ an isogeny from $E$ to $E^\sigma$ of degree $d$ and $C_d$ its kernel. Define $n'=n/(n,d)$ (always prime to $d$) and assume that $E$ has a subgroup $C_{n'}$ of order $n'$ defined over $K$, and that $C_{n'}^\sigma = \mu(C_{n'})$ as subgroups of $E^\sigma$. Then:
		
		\begin{itemize}
			\item If $n=14$ then $E$ has potentially good reduction at every prime ideal of $K$.
			
			\item If $n=15$ then $E$ has potentially good reduction at every prime ideal not dividing 2.
			
		\end{itemize}
	
\end{lemma}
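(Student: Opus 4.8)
The plan is to run Mazur's method (the "formal immersion" technique) as developed by Le Fourn for $\Q$-curves, using the extra rational subgroup structure to produce a map to a product of modular curves on which the relevant point behaves like a rational point reducing to a cusp. More precisely, suppose $E/K$ is a central $\Q$-curve of degree $d\nmid n$ with the subgroup $C_{n'}$ defined over $K$ satisfying $C_{n'}^\sigma=\mu(C_{n'})$. The pair $(E,C_d)$ together with the cyclic subgroup $C_{n'}$ gives a point on $X_0(n'd)$ (or, after quotienting by the relevant Atkin--Lehner involution coming from the $\Q$-curve symmetry, on a suitable $X_0^+$-type curve) that is rational over $K$ but whose conjugate is related to it by the involution; the trace/norm of this point down to $\Q$ then lands on the rational points of a quotient curve. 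I would first make explicit which modular curve this is: for $n=14$, $d$ is prime to $7$ so $n'=14$ and we look at $X_0(14d)/w$, while for $n=15$ we split into the cases $3\mid d$, $5\mid d$ and $\gcd(d,15)=1$ to see whether $n'=15$, $5$ or $3$.

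Next I would invoke the reduction-type consequences of Mazur's method in the form already available in the paper, namely \Cref{tm:lf}: if for some prime $p\geq 11$, $p\neq 13,17,41$, with $p\nmid d_\sigma$, the mod $p$ representation of $E$ is reducible, then $E$ has potentially good reduction everywhere. Here $E$ has the $K$-rational subgroup $C_{n'}$, so the mod $p$ representation is reducible for $p\mid n'$ (for $n=14$ this gives $p=7$, which is too small, so this direct route fails — see below). The actual argument must therefore go through a prime obtained by \emph{composing} isogenies: since $C_d$ and $C_{n'}$ are both available and compatible with $\sigma$, one builds a $K$-rational cyclic subgroup of order $d n'$ or exploits the $X_0^+(N)$ rational point to deduce reducibility of a mod $p$ representation at a genuinely large prime $p$ dividing $d$, and then one checks $p\notin\{13,17,41\}$ and $p\nmid d_\sigma$ by a case analysis on the (finitely many) small exceptional degrees, discarding those finitely many $j$-invariants. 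Once potential good reduction everywhere is established, the $j$-invariant of $E$ is an algebraic integer, which is the form in which the lemma is used in \Cref{prop:14} and \Cref{prop:15}.

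For $n=15$ the prime $p=5$ dividing $n'$ in the generic case is still too small for \Cref{tm:lf}, so again the large prime has to come from the degree $d$; the exception "at every prime ideal not dividing $2$" reflects that Mazur's method at the prime above $2$ is the one place the formal immersion criterion can fail, exactly as in the proof of \Cref{prop:11}, so there one argues separately (or simply allows the exception, since in \Cref{prop:15} the residual finiteness at $2$ is handled by the $X_0(45)$, $X_0(75)$ argument). The main obstacle I anticipate is precisely this bookkeeping: one has to verify the formal immersion hypothesis at the chosen prime $p$ for the specific modular curve and its Atkin--Lehner quotient, rule out the sporadic "non-cuspidal rational points" (which is where results like Momose's on $X_0^+(N)$ with $121\mid N$, or direct computation for small $N$, come in), and track down the finite list of degrees $d$ for which $p$ could coincide with $13$, $17$ or $41$ or could divide $d_\sigma$. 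Everything else — the construction of the modular point, the behaviour under $\sigma$, and the passage from potential good reduction to integrality of $j$ — is formal given \Cref{prop:central}, \Cref{thm:st}, \Cref{tm:lf} and the compatibility hypothesis $C_{n'}^\sigma=\mu(C_{n'})$.
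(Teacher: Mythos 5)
Your first paragraph points in the right general direction (a trace map to a modular curve with finitely many rational points, plus a formal immersion), but the route you then commit to --- deducing the result from \Cref{tm:lf} via reducibility of a mod $p$ representation at a large prime --- cannot work, and the workaround you propose is self-contradictory. The primes coming from the rational subgroup $C_{n'}$ are $2,3,5,7$, all below the threshold $p\geq 11$ of \Cref{tm:lf}, as you note. Your fallback is to find a large prime $p$ dividing $d$; but for a central $\Q$-curve of degree $d$ one has $d=d_\sigma$, so any such $p$ divides $d_\sigma$ and is therefore excluded by the very hypothesis $p\nmid d_\sigma$ of \Cref{tm:lf}. There is no case analysis that rescues this: the proposition you want to cite is structurally unable to see the primes dividing the degree of the $\Q$-curve. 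Moreover, reducibility mod a prime dividing $d$ would only give a $K$-rational subgroup of $E$ if the kernel $C_d$ of $\mu$ were being used as such, which is exactly the datum that must instead be fed into the modular interpretation.

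The actual proof does not pass through \Cref{tm:lf} at all; it reruns Mazur's method directly. The hypothesis $C_{n'}^\sigma=\mu(C_{n'})$ says precisely that the point $P=(E,C_d,C_{n'})$ of $X_0(dn')$ satisfies $P^\sigma=w_d\cdot P$, so the map $f=\pi+\pi\circ w_d$ (with $\pi:X_0(dn')\to X_0(n)$ the degeneracy map) sends $P$ to a \emph{rational} point of $X_0(n)$, and $X_0(14)(\Q)$ and $X_0(15)(\Q)$ are finite. One checks that $f$ is a formal immersion at the cusp $\infty$ (using that $\pi$ is a formal immersion at $\infty$ and ramified at the other relevant cusps, including at $w_d(\infty)$ in the case $(d,n)\neq 1$), assumes $P$ reduces to a cusp at some $\lambda\mid\ell$, moves that cusp to $\infty$ by Atkin--Lehner involutions (legitimate since $dn'$ is squarefree), and applies Raynaud's lemma in the form of \cite[Proposition 2.4]{lf} to force $P=\infty$, a contradiction. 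The finiteness of $X_0(n)(\Q)$ is what replaces your ``large prime.'' Finally, your explanation of the exception at $2$ for $n=15$ is not quite right: the formal immersion does not fail there; rather, when $\ell=2$ Raynaud's lemma needs the extra hypothesis that $f(P)$ does not generate a $\mu_2$ over $\Z_2$. For $X_0(14)$ the unique point of order $2$ does not reduce to the identity mod $2$, so the argument goes through at $2$; for $X_0(15)$ there is a rational point of order $2$ reducing to the identity, which is exactly why the conclusion for $n=15$ excludes the primes above $2$ (and why the paper later needs Runge's method in Lemma \ref{appendix:lem2}).
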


\begin{proof}

	Let $E$ be an elliptic curve satisfying the hypotheses of the lemma. Let us first assume that $d$ is prime to $n$, hence $n'=n$. The Atkin-Lehner involution $w_d$ acts on the noncuspidal points of $X_0(dn)$ by
	\[
	w_d(E,C_d,C_n) = (E/C_d,E[d]/C_d,(C_n+C_d)/C_d),
	\]
	and by hypothesis one has $P^\sigma = w_d \cdot P$ for the point $P=(E,C_d,C_n)$ coming from our choice of elliptic curve. Let us then define $\pi : X_0(dn) \rightarrow X_0(n)$ to be the natural degeneracy morphism, and $f = \pi + \pi \circ w_d$, so that $f(P)$ is clearly a rational point on $X_0(n)$.
	
	Now, by classical arguments (see the complex case, or more precisely look at the $q$-expansions at the cusps), the degeneracy morphism $X_0(d) \rightarrow X(1)$ is a formal immersion for all prime ideals at the cusp $\infty \in X_0(d)$ and ramified at every other cusp, and this holds similarly for $\pi$: amongst the cusps of $X_0(dn)$ above $\infty \in X_0(n)$, the morphism $\pi$ is a formal immersion at $\infty \in X_0(dn)$ and ramified at the other cusps. Consequently, $f$ is a formal immersion at the cusp $\infty \in X_0(dn)$ for all prime ideals.
	
	Now, let us assume that the point $P$ associated to our $\Q$-curve reduces to a cusp at some prime $\lambda$ dividing the prime number $\ell>2$. As the Atkin-Lehner involutions act transitively on the cusps because $dn$ is squarefree, we can assume that $P$ reduces to the cusp  $\infty$ modulo $\lambda$ (in particular, its reduction lands in the smooth part of the N\'eron model of $X_0(dn)$), and it does not change the fact that $w_d \cdot P = P^\sigma$ as the Atkin-Lehner involutions acts through an abelian group and are defined over $\Q$. Now, $f$ is a formal immersion at $\infty$, sends $P$ to a rational point, and the group of rational points of $X_0(n)$ is finite. One can thus apply Raynaud's lemma in the form given in \cite[Proposition 2.4]{lf} as $1< \ell -1$, and $P$ must be equal to $\infty$ which is obviously a contradiction. Therefore, $E$ has potentially good reduction at all prime ideals of $\mathcal{O}_K$ not above 2.
	
	Let us now consider the case where the prime ideal $\lambda$ is above $\ell=2$. Following step by step the previous argument is straightforward up to the moment we apply \cite[Proposition 2.4]{lf}. In this case, for $\ell=2$, $1=\ell-1$ therefore one needs to know that $f(P)$ does not generate a subgroup scheme $\mu_2$ over $\Z_2$, i.e. it is not a point or order 2 reducing to 0 modulo 2. The modular curve $X_0(14)(\Q)$ is \lmfdbec{14}{a}{6} in the LMFDB \cite{lmfdb}, $X_0(14)(\Q)\cong \Z/6\Z$, and we readily check that the unique point of order 2 does not reduce to 0 in the reduction modulo 2 of this elliptic curve (which is nonsplit multiplicative in this case). Therefore, we can again apply \cite[Proposition 2.4]{lf}, which completes the proof for all prime ideals in the case $n=14$.
	
	Assume now that $(d,n) \neq 1$ (recall we have assumed that $d$ does not divide $n$ therefore $n'$ is a prime number). By the hypothesis and with the notations of the Lemma, the triple $(E,C_d,C_{n'})$ defines a quadratic point $P$ on $X_0(dn')$ (which is of squarefree level by construction), which furthermore satisfies
	\[
	P^\sigma = w_d \cdot P.
	\]
	We define the morphism $\pi : X_0(dn') \rightarrow X_0(n)$ as the natural degeneracy morphism which functorially sends a triple $(E,C_d,C_{n'})$ to the unique cyclic subgroup of order $n$ of $C_d+C_{n'}$, and again we define $f=\pi + \pi \circ w_d$. The degeneracy morphism $X_0(d) \rightarrow X(1)$ is again ramified at all cusps except $\infty$, and a formal immersion at $\infty$ for all primes. Let us now assume that $E$ has potentially bad reduction at some prime ideal $\lambda$ of $\mathcal{O}_K$. One can again use the Atkin-Lehner involutions to assume that $P$ reduces to the pole  $\infty \in X_0(dn')$. Now, the morphism $\pi$ is ramified at the image of this pole by $w_d$. Indeed, writing $d'=d/(n,d)$, cusps of $X_0(dn')$ can be seen as triples of cusps of respectively $X_0(d'),X_0((n,d)),X_0(n')$ in a way compatible with the action of Atkin-Lehner involutions $w_{d'}, w_{(n/d)},w_{n'}$ (this can be worked out straightforwardly from the definitions of Atkin-Lehner involutions, e.g. \cite[Lemma 9]{atkin}, the levels here being squarefree). In particular, $w_d \cdot(\infty,\infty,\infty) = (0,0,\infty)$ and $\pi$ sends $(0,0,\infty)$ to $(0,\infty)$, whence it is ramified at this point (as $X_0(d') \rightarrow X(1)$ is ramified at 0). It is, as before, a formal immersion at $\infty$ for the $\lambda$, so we again have that $f$ is a formal immersion at the cusp $\infty \in X_0(dn')$ (it works even though the images of $\pi(\infty)$ and $\pi(w_d(\infty))$ are not the same by a straightforward computation of the differential), and can reproduce the argument above word for word, which concludes the proof of the Lemma.
\end{proof}

To apply Lemma \ref{appendix:lem1} to the proofs of Propositions \ref{prop:15} and \ref{prop:14} we still need to show that we can suppose that $C_{n'}^\sigma = \mu(C_{n'})$ in the sense that there will be finitely many $\Q$-curves that do not satisfy these assumptions.

\begin{lemma}
\label{appendix:lem3}
Let all the assumptions be as in Lemma \ref{appendix:lem1}, except we do not assume that $C_{n'}^\sigma = \mu(C_{n'})$. Then for all but finitely many such $\Q$-curves, the following is true:
\begin{itemize}
			\item If $n=14$ then $E$ has potentially good reduction at every prime ideal of $K$.
			
			\item If $n=15$ then $E$ has potentially good reduction at every prime ideal not dividing $2$.
			
		\end{itemize}
\end{lemma}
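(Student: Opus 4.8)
The plan is to reduce everything to \Cref{appendix:lem1}. That lemma already gives the asserted potentially good reduction for \emph{every} curve satisfying its hypotheses together with the equality $C_{n'}^\sigma=\mu(C_{n'})$, so it suffices to prove that only finitely many of the curves satisfying the hypotheses of \Cref{appendix:lem1} fail this equality; call such a curve \emph{bad}. The crux is to show that each bad curve produces, after a controlled isogeny, a cyclic isogeny of large degree defined over $K$, so that \Cref{tm:bars} forces finiteness.

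First I would unpack badness. Arguing as in the proof of \Cref{prop:11}, after replacing $E$ by a $K$-isomorphic twist we may assume $\mu$ and $\ker\mu$ are defined over $K$; then $C_{n'}^\sigma$ and $\mu(C_{n'})$ are two cyclic subgroups of $E^\sigma$ of order $n'$, both defined over $K$. Since $\deg\mu=d$ is prime to $n'$, the group $\mu^{-1}(C_{n'}^\sigma)\subseteq E$ is cyclic of order $dn'$, hence contains a unique cyclic subgroup $D$ of order $n'$, and $\mu(D)=C_{n'}^\sigma$; as $\mu(C_{n'})=\mu(D)$ would force $C_{n'}=D$, a bad $E$ carries two \emph{distinct} $K$-rational cyclic subgroups of order $n'$, namely $C_{n'}$ and $D$.

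Next I would build the isogeny. Because $n'$ is prime and $C_{n'}\cap D=0$, the composite $E/C_{n'}\to E\to E/D$ is a cyclic isogeny of degree $n'^2$ defined over $K$ (the standard fact that two distinct $n'$-isogenies compose to a cyclic $n'^2$-isogeny). Moreover $(\ker\mu+C_{n'})/C_{n'}$ is a $K$-rational cyclic subgroup of $E':=E/C_{n'}$ of order $d$, and since $\gcd(n',d)=1$ this yields a $K$-rational cyclic isogeny of degree $n'^2 d$ out of $E'$; hence $E'$ defines a quadratic point on $X_0(n'^2 d)$. Now $n'^2 d$ is composite (it is divisible by $n'^2$ with $n'\ge 2$), and, since $d$ is squarefree and does not divide $n$, a short check of the possibilities for $(n,d)$ shows $n'^2 d\ge 84$; since the only members of the list of \Cref{tm:bars} that exceed $83$ are the primes $89,101,131$, the level $n'^2 d$ is not in that list, and $X_0(n'^2 d)$ has genus $\ge 2$. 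Therefore $X_0(n'^2 d)$ has only finitely many quadratic points, so for each fixed $d$ there are only finitely many possibilities for $j(E')$, hence (via the $n'$-isogeny $E\to E'$ and \Cref{lem:finite}, noting also that the quadratic field $K$ is determined by $j(E)$ when $j(E)\notin\Q$, and is otherwise controlled as in \Cref{lem:finite2}) only finitely many bad curves of degree $d$.

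The last point, which I expect to be the genuine obstacle, is that a priori $d$ runs over infinitely many squarefree integers not dividing $n$, so that although \Cref{tm:bars} controls each individual $d$, one must exclude an infinite union over $d$. Here one needs a finiteness input beyond the modular-curve bookkeeping above: a bound on the degree $d_\sigma$ of the non-CM quadratic $\Q$-curves in question (equivalently, the finiteness of rational and quadratic points on the curves $X_0^+(m)$ and their companions as $m$ grows), which is the role played by \Cref{appendix:lem2}. Granting this, only finitely many $d$ occur, and combined with the previous paragraph the set of bad curves is finite; together with \Cref{appendix:lem1} this proves the lemma.
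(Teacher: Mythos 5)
Your opening reduction (only finitely many ``bad'' curves can fail $C_{n'}^\sigma=\mu(C_{n'})$) and your unpacking of badness into two \emph{distinct} $K$-rational cyclic subgroups of order $n'$ on $E$ match the paper's proof. But the step where you assemble these into a modular point goes wrong in a way that you yourself flag and then cannot repair. By adjoining the degree-$d$ subgroup $\ker\mu$ you land on $X_0(n'^2d)$, whose level grows with $d$; since $d$ ranges a priori over infinitely many squarefree integers, \Cref{tm:bars} applied level-by-level gives nothing, and the ``finiteness input'' you invoke to bound $d$ --- \Cref{appendix:lem2} --- is circular: its proof in the paper \emph{uses} \Cref{appendix:lem3} (semistability plus good reduction away from $2$ is exactly what makes the point $\mathcal{O}_{K,S}$-integral there, which is what Runge's method needs). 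Moreover \Cref{appendix:lem2} only concerns $n=15$; there is no analogous statement for $n=14$, where the finiteness of curves with $d\nmid 14$ is deduced in \Cref{prop:14} \emph{from} \Cref{appendix:lem3} via integrality of $j$ and \Cref{tm:int}. So as written the argument either assumes what it is proving or leaves the $n=14$ case open.

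The paper avoids this entirely by combining the two distinct $n'$-subgroups not with $\ker\mu$ but with the $K$-rational subgroup $C_n$ of order $n$ (which exists because $E$ has a point of order $n$ over $K$, and which contains $C_{n'}$ but meets the second $n'$-subgroup trivially). The resulting quadratic point lies on $X_0(nn')$ (resp.\ $X_0(kn)$ with $k\mid n$ prime in the case $(d,n)=1$), whose level lies in a fixed finite set $\{28,45,49,75,196,225\}$ \emph{independent of $d$}. Then \Cref{tm:bars} gives finiteness for every level except $28$, and the $X_0(28)$ case is disposed of separately: when it arises from $(n,d)=7$ it forces a point of order $7$ on a $\Q$-curve of degree divisible by $7$, contradicting \Cref{thm:st}; when $(d,n)=1$ one uses the result of Bruin--Najman that all but finitely many quadratic points of $X_0(28)$ are $\Q$-curves of degree $7$, which is again excluded. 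If you replace your $X_0(n'^2d)$ step by this bounded-level construction, the rest of your argument (including the bookkeeping via \Cref{lem:finite} and \Cref{lem:finite2}) goes through.
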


\begin{proof}
Assume that $E$ is a central $\Q$-curve of degree $d$, defined over $K$ quadratic and admitting a point of order $n=14$ or 15 over $K$.  We define 
$\mu$ to be the isogeny of degree $d$ from $E$ to its conjugate. By assumption $d$ does not divide $n$, so $(n,d) \in \{1, 2,3,5,7 \}$.

Let us first assume that $(n,d) \neq 1$ so that $n'$ is prime. Let $C_n$ be the cyclic subgroup of order $n$ generated by a $n$-torsion point, and $C_{n'}$ its $n'$-torsion subgroup. As $d$ is prime to $n'$, $\widehat{\mu}(C_{n'}^\sigma)$ is a subgroup of order $n'$ of $E$ defined over $K$, and if it is not $C_{n'}$, the elliptic curve $E$ is thus endowed with two distinct subgroups of order $n'$ and one subgroup of order $n/n'$, all defined over $K$, from which we obtain a point of $X_0(nn')$. Such a modular curve has only finitely many quadratic points unless $n=14$ and $n'=2$ by Theorem \ref{tm:bars}, but this implies that there is a point of order 7 on the $\Q$-curve, which is impossible by \Cref{thm:st} (note that one can suppose $j(E)\not\in \Q$ by Lemma \ref{lem:finite2}).

Consider now the case $(d,n)=1$. The previous argument holds up until considering the possibility that $\widehat{\mu}(C_{n}^\sigma) \neq C_n$. In this case, $E$ is endowed with an additional $k$-subgroup for a prime number $k$ dividing $n$, which defines (after possibly considering an isogenous elliptic curve) a quadratic point of $X_0(kn)$ where $kn \in \{28,49,45,75,196,225\}$. For all cases but the first, there are only finitely many quadratic points on those curves by \Cref{tm:bars}. In the first case, it follows from \cite{BN} that, up to finitely many exceptions, the quadratic points of $X_0(28)$ correspond to $\Q$-curves of degree 7, which cannot happen here as we assumed $(d,n)=1$.
	

	
\end{proof}

	Unfortunately, one cannot apply Lemma \ref{appendix:lem3} to prove Proposition \ref{prop:15} directly, as there exists the issue of the $j$-invariant possibly not being integral at the primes above 2. For $n=15$, the group of rational points of $X_0(n)$ is isomorphic to $\Z/4\Z \times \Z/2\Z$, the reduction modulo 2 is good, and there \textit{is} one point generating a $\mu_2$: in the LMFDB database, a Weierstrass equation for $X_0(15)$ is given by the elliptic curve defined by the affine equation
	\[
	y^2 + xy + y = x^3 +x^2 - 10x-10,
	\]
	and this point is then given by the coordinates $(-13/4,9/8)$. A natural workaround would be to use here the strengthened hypothesis that there is a point of order 15 defined over $K$, not merely a subgroup (hence to argue with $X_1(15)$ instead of $X_0(15)$). The problem is that it does not behave that well with respect to the structure of $\Q$-curves, in particular there does not seem to be a clear general way to define a morphism $X_0(d) \times X_1(n) \rightarrow X_1(n)$ which sends our point associated to $E$ to a rational point and is a formal immersion at the cusp $\infty$ at the same time.
	
	After some time of reflection, we could not come to an argument making use of an additional property of $P$ (relatively to more general quadratic points of $X_1(n)$) to ensure that $f(P)$ is not this point. We instead settle this case by applying  Runge's method.

\begin{lemma} \label{appendix:lem2} There exist finitely many central $\Q$-curves $E$ over quadratic fields, without CM, with a point of order $15$ and of degree $d$ not dividing $15$.\end{lemma}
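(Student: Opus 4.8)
The plan is to combine \Cref{appendix:lem3} with Runge's method. By \Cref{lem:finite2} we may assume $j(E)\notin\Q$, and then by \Cref{appendix:lem3} we may, after discarding finitely many curves, assume that $E$ is a central $\Q$-curve of squarefree degree $d\nmid 15$ with $C_{n'}^\sigma=\mu(C_{n'})$ for $n'=15/(15,d)$, and that $E$ has potentially good reduction at every prime of $\OO_K$ not dividing $2$. The triple $(E,C_d,C_{n'})$ then defines a quadratic point $P$ on $X_0(M)$ with $M=dn'$ squarefree, satisfying $P^\sigma=w_d\cdot P$. A short analysis of the possibilities for $(15,d)$ (using $(d,n')=1$ and $d\nmid 15$) shows that $\omega(M)\ge 3$, so that $X_0(M)$ has at least $8$ cusps, all rational and simply transitively permuted by the Atkin--Lehner group.

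The next step is to turn the reduction hypothesis into an integrality statement. Since $E$ has potentially good reduction away from $2$, the point $P$ cannot reduce to a cusp of $X_0(M)$ modulo any prime $\lambda\nmid 2$ of $\OO_K$, because reduction to a cusp forces multiplicative reduction of $E$ at $\lambda$. Thus $P$ is integral, with respect to the cusps, outside the set $S$ of places of $K$ above $2$ and $\infty$, and $|S|\le 4$ independently of $K$. I would then invoke Runge's method in the form adapted to quadratic $\Q$-curves in \cite{lf}, applied to $X_0(M)$ with the twisted condition $P^\sigma=w_d\cdot P$; that condition is exactly what controls the behaviour of $P$ at a prime $\lambda$ in terms of its behaviour at $\lambda^\sigma$. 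Since $X_0(M)$ has at least $8$ cusps while $|S|$ is small, the Runge condition holds and one obtains an absolute bound on the height of $j(E)$.

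Finally, a bounded set of $j$-invariants with $j\notin\Q$ can belong to only finitely many quadratic fields $K$, and for each such $K$ and $j$ there are finitely many $E/K$ with that $j$-invariant and a point of odd order by \Cref{lem:finite}; adding back the finitely many exceptions from \Cref{appendix:lem3} gives the lemma. I expect the main difficulty to be the precise verification of the Runge condition: one has to check that the $2^{\omega(M)}$ cusps of $X_0(M)$ and their orbit structure under the $w_d$-twisted Galois action genuinely outweigh the contributions of the places above $2$ and $\infty$, uniformly over all admissible $M$ --- in particular over the infinite families with $\omega(M)=3$, such as $M=15p$ for a prime $p\ne 3,5$. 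This is where the quantitative Runge theorem of \cite{lf} is essential, and it is precisely the reason this case, unlike $\cC_{11}$ or $\cC_{14}$, cannot be settled by Mazur's method alone.
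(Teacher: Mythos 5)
Your first half matches the paper: reduce to $j(E)\notin\Q$ via \Cref{lem:finite2}, invoke \Cref{appendix:lem3} to get potentially good reduction away from $2$ after discarding finitely many curves, and then try to convert this integrality into a Runge-type height bound. The divergence — and the gap — is in \emph{which} modular curve you feed to Runge. You apply Runge's method to $X_0(M)$ with $M=dn'$, where $d$ is the degree of the $\Q$-curve. But $d$ is not bounded a priori: the whole point of the lemma is to handle all degrees $d\nmid 15$ at once. Runge's method, in any of its forms (including the quantitative version for quadratic $\Q$-curves in \cite{lf}), produces a height bound that \emph{depends on the curve}, i.e.\ grows with the level $M$. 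So at best you get ``finitely many curves for each fixed $M$,'' and the union over the infinitely many admissible $M$ (e.g.\ $M=15p$ for varying primes $p$) is not finite. The difficulty you flag at the end — uniformity over all admissible $M$ — is not a technical verification that the quantitative theorem of \cite{lf} handles; it is the reason the argument as structured does not close. To finish along your lines you would need an independent bound on $d$ itself, which is a different (isogeny-bound or Mazur-type) problem.

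The paper avoids this entirely by a small but decisive change of target curve. Since $E$ has a point of order $15$ over $K$, it is semistable, so potentially good reduction away from $2$ upgrades to good reduction there; hence $j(E)$ is integral outside the places above $2$, and $E$ defines an $\mathcal{O}_{K,S}$-integral point of the \emph{fixed} curve $X_1(15)$, with $S$ the places above $2$ and $\infty$, so $|S|\le 4$. Since $4$ is less than the number of $\Gal(\overline{\Q}/\Q)$-orbits of cusps of $X_1(15)$, the Runge condition holds for this single curve, and \cite[Theorem 1.2]{BP11} gives an \emph{absolute} bound on $h(j(E))$, independent of $d$ because the curve $X_1(15)$ does not see $d$. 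Your closing steps (bounded height plus $j\notin\Q$ gives finitely many pairs $(j,K)$, then \Cref{lem:finite}) are fine and are also how the paper concludes; the fix is simply to run Runge on $X_1(15)$ (or $X_0(15)$) rather than on the unbounded family $X_0(dn')$.
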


\begin{proof}
	
	

	

	
Let $E/K$ be such a curve, where $K$ is a quadratic field. Notice that $E$ must be semistable because it has a point of order 15 defined over $K$, hence it has good reduction at every prime ideal not dividing 2 by Lemma \ref{appendix:lem3}. Consider the set of places $S$ of $K$ made up of the infinite places and the places above 2, so that $E$ defines an $\mathcal{O}_{K,S}$-integral point of $X_1(15)$. Clearly, $|S|<5$, which is less than the number of orbits of cusps of $X_1(15)$ by $\operatorname{Gal}(\Qbar/\Q)$ (hence $\operatorname{Gal}(\overline{K}/K)$).

This allows us to apply Runge's method to our integral point, and thus get an \textit{absolute} bound on the height of its $j$-invariant - see \cite[Theorem 1.2]{BP11} for details. In particular, there are only finitely many such elliptic curves, which is what we wanted to prove.
\end{proof}

\end{document}